\documentclass[11pt]{article}
\usepackage{graphicx}
\usepackage{amsmath}
\usepackage{amsfonts}
\usepackage{amsthm}
\usepackage{amssymb}
\usepackage{color}

\usepackage{verbatim} 
\usepackage[hyphens]{url}
\usepackage{hyperref}
\usepackage[margin=1in]{geometry}
\usepackage{setspace}
\usepackage{float}
\usepackage{color}
\doublespacing

\newtheorem{theorem}{Theorem}
\newtheorem{lemma}[theorem]{Lemma}
\newtheorem{corollary}[theorem]{Corollary}

\begin{document}

\title{Weighted Mediants and Fractals}

\author{Dhroova Aiylam \\ MIT \\ \href{mailto:dhroova@mit.edu}{dhroova@mit.edu} \and Tanya Khovanova \\ MIT \\ \href{mailto:tanyakh@yahoo.com}{tanyakh@yahoo.com}}

\maketitle

\begin{abstract}
In this paper we study a natural generalization of the Stern-Brocot sequences which comes from the introduction of weighted mediants. We focus our attention on the case $k = 3$, in which $(2a + c)/(2b + d)$ and $(a + 2c)/(b + 2d)$ are the two mediants inserted between $a/b$ and $c/d$. We state and prove several properties about the cross-differences of Stern-Brocot sequences with $k = 3$, and give a proof of the fractal-like rule that describes the cross-differences of the unit $k = 3$ Stern-Brocot sequences, i.e. the one with usual starting terms $0/1, 1/1$ and with reduction of fractions.
\end{abstract}

\section{Introduction}

\indent

The Stern--Brocot tree is an object of classical interest in number theory. Discovered independently by Moritz Stern in 1858 \cite{Stern} and Achille Brocot in 1861 \cite{Brocot}, it was originally used as a way to find rational approximations of certain kinds to specific numbers. As a consequence, the Stern-Brocot tree is deeply connected to the theory of continued fractions. It also comes up in a variety of other contexts, including Farey Sequences, Ford Circles, and Hurwitz' theorem.

The classical Stern-Brocot sequences are generated row by row, as follows: the first row has entries $\frac{0}{1}$ and $\frac{1}{0}$. In each subsequent row, all entries from the previous row are copied and between every pair of neighboring entries $\frac{a}{b}$ and $\frac{c}{d}$ the mediant fraction $\frac{a + c}{b + d}$ is inserted. This process is repeated ad infinitum; the result gives the Stern-Brocot sequences as rows.

One can generalize the notion of a mediant if we assign integer weights to fractions. Suppose we assign weight $m$ to the left fraction $a/b$ and weight $n$ to the right fraction $c/d$. Then the weighted mediant is $(ma+nc)/(mb+nd)$. The total weight in this case is $m+n$, so that the classical case corresponds to a total weight of 2 with $m=n=1$.

James Propp proposed a weighted generalization of the Stern-Brocot sequences \cite{Propp} via a modified mediant-insertion process. Instead of a single mediant fraction, all possible weighted mediants of total weight $k$ are inserted, in increasing order, between every pair of consecutive fractions in the previous row. The number of inserted mediant fractions is $k-1$; thus the classical Stern-Brocot sequences correspond to the case $k = 2$, while we focus our attention here on the case $k = 3$.

Our main results in this paper are the following: we state and prove several properties of the cross-differences of Stern-Brocot sequences with $k = 3$, culminating in a proof of the fractal-like rule that governs the cross-differences in Stern-Brocot sequences corresponding to the starting terms $0/1$ and $1/1$, and with reduction of fractions.

In Section~\ref{sec:defs}, we define precisely the Stern-Brocot  sequences from weighted mediants given a pair of starting terms. We also define the cross-difference of two consecutive fractions $a/b$ and $c/d$ in the same row as $bc-ad$. We discuss the properties of cross-differences.

In Section~\ref{sec:RandCDs} we explain how reduction to lowest terms and cross-differences are related to each other. In Section~\ref{sec:noRed} we suggest a variation where the fractions are never reduced to lowest terms. In this variation the formula for cross-differences is simple and has a fractal structure.

In Section~\ref{sec:utRed} we go back to our main object of study, the unit Stern-Brocot sequences of weight 3 and show how the reduction works in this case. Understanding the reduction allows us to make a careful analysis of the cross-differences in the most interesting case, with starting terms $0/1$ and $1/1$ and in which fractions are reduced. This happens in Section~\ref{sec:CDs} where we ultimately use our analysis to prove the fractal-like rule which the cross-differences follow. The cross-differences of one Stern-Brocot sequence exhibit some self-similar behavior, but they are not completely self-similar. We call this structure \textit{quasi-fractal}.

We continue describing our quasi-fractals in Section~\ref{sec:CDsCont}, where we give an explicit formula for the cross-differences. We conclude in Section~\ref{sec:reallife} with examples from real life where quasi-fractals similar to ours appear.

\section{Stern-Brocot sequences from weighted mediants. Notation and definitions}\label{sec:defs}

For a fixed parameter $k$, we say the weighted mediants of two fractions $a/b$ and $c/d$ are $$\frac{(k - 1)a + c}{(k - 1)b + d}, \; \; \frac{(k - 2)a + 2c}{(k - 2)b + 2d}, \; \; \dots, \; \; \frac{a + (k - 1)c}{b + (k - 1)d}$$ whence there are $k - 1$ mediants in all. As in the classical Stern-Brocot case, we start with two terms and each row is obtained by inserting mediants between consecutive fractions in the previous row.  With this notation, the classical Stern-Brocot corresponds to $k = 2$ with starting terms $0/1$ and $1/0$. The second row is $0/1$, $1/1$, and $1/0$. The sequences can be divided into two equivalent halves by the mid-line. Indeed, if we swap numerators and denominators and reverse the order, the left half becomes the right half. For this reason many researchers study only the left half of, that is, the sequences with the starting numbers $0/1$ and $1/1$. The other rational starting points for the classical Stern-Brocot sequences were studied in \cite{A}.

In this paper we restrict our attention to the case $k = 3$ with starting terms $0/1$ and $1/1$. We call this case the \textit{unit} case. Here is what the first three rows of the unit case look like:

	\[ \frac{0}{1} \; \; \; \frac{1}{1}\]  \[\frac{0}{1} \; \; \; \frac{1}{3} \; \; \; \frac{2}{3} \; \; \; \frac{1}{1}\] 
	\[\frac{0}{1} \; \; \; \frac{1}{5} \; \; \;  \frac{2}{7} \; \; \; \frac{1}{3} \; \; \; \frac{4}{9} \; \; \; \frac{5}{9} \; \; \; \frac{2}{3} \; \; \; \frac{5}{7} \; \; \; \frac{4}{5} \; \; \; \frac{1}{1}\]
	
Let us now introduce some notation and definitions. If $\frac{p}{q}$ and $\frac{r}{s}$ are rational numbers in lowest terms, their \emph{weighted mediants} are the numbers $\frac{2p + r}{2q + s}$ and $\frac{p + 2r}{q + 2s}$ in lowest terms. We call these the \textit{left} and \textit{right} mediants of $\frac{p}{q}$ and $\frac{r}{s}$, respectively. We say that $\frac{p}{q}$ and $\frac{r}{s}$ are the \textit{parents} of the mediants $\frac{2p + r}{2q + s}$ and $\frac{p + 2r}{q + 2s}$. Notice that numbers in each row are in increasing order.

Next let $SB_i$ stand for the $i$-th row. By tradition, the first row---the starting terms  $\frac{0}{1}$ and $\frac{1}{1}$ are considered row 0.
The sequence $SB_i$ is also called \textit{the Stern-Brocot sequence of order $i$}. Thus $SB_0 = \{ \frac{0}{1}, \frac{1}{1}\}$ and $SB_{i + 1}$ is obtained by copying all terms from $SB_{i}$ and inserting between every pair of consecutive fractions $\frac{p}{q}, \frac{r}{s} \in SB_{i}$ their weighted mediants.

We say the $\emph{cross-difference}$ of two fractions $\frac{p}{q}$ and $\frac{r}{s}$ is $\mathcal{C}(\frac{p}{q}, \frac{r}{s}) = qr - ps$. We are most interested in the cross-differences of consecutive numbers in $SB_i$. As was the case  when $k = 2$ \cite{AK}, the cross-difference essentially determines how fractions in the Stern-Brocot sequences are capable of reducing. In particular, the factor by each the ratio of a weighted mediant is reduced to its lower terms is a factor of $\mathcal{C}(\frac{p}{q}, \frac{r}{s})$, as we prove in Lemma~\ref{thm:red-cd}.

The cross-difference of two fractions is positive if the second fraction is larger than the first. In our case all the cross-differences are positive.

It is important to remember that the cross-difference depends on the representation of rational numbers, not just on the numbers themselves. In particular, when we reduce one of the fractions the value of the cross-difference decreases, and so the cross-difference is smallest when both rational numbers are in lowest terms. 

The following statements describe known results about 3-Stern-Brocot sequences \cite{AK}.

\begin{lemma}
All the denominators in the Stern-Brocot sequence with $k=3$ are odd and the numerators in each row alternate between even and odd.
\end{lemma}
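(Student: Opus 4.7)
The plan is to prove both claims simultaneously by induction on the row index $i$.

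For the base case, row $0$ is $0/1,\;1/1$, whose denominators are both odd and whose numerators $0,1$ alternate even/odd. Assume inductively that row $SB_i$ has all odd denominators and that its numerators alternate in parity. Pick two consecutive entries $p/q$ and $r/s$ in $SB_i$, where $q,s$ are odd and $p,r$ have opposite parities. The two weighted mediants, \emph{before} reduction, are $(2p+r)/(2q+s)$ and $(p+2r)/(q+2s)$. The denominators $2q+s$ and $q+2s$ are each odd + even = odd, while the numerator $2p+r$ has the parity of $r$ and the numerator $p+2r$ has the parity of $p$.

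The key observation is that reduction to lowest terms cannot change the parity of either the numerator or the denominator of these mediants. Indeed, the pre-reduction denominator is odd, so its greatest common divisor with the numerator is odd; dividing an integer by an odd divisor preserves both its parity and the oddness of the denominator. Consequently the reduced left mediant has an odd denominator and a numerator of the same parity as $r$, and the reduced right mediant has an odd denominator and a numerator of the same parity as $p$.

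It remains to check alternation. Inserting the two mediants between $p/q$ and $r/s$ produces a block of four consecutive numerators with parities $p,\;r,\;p,\;r$; since $p$ and $r$ have opposite parities by induction, these four entries alternate. The new entries at the seams between different parent pairs also alternate because, for the right neighbor $r/s$ of one pair and the left neighbor $r/s$ of the next, the right mediant of the first pair carries parity $p$ and the left mediant of the next pair carries the parity of the numerator two positions to the right of $p$, which by induction also has the parity opposite to $r$. Thus the entire new row $SB_{i+1}$ satisfies both properties, completing the induction. The only subtle step is the claim that reduction is parity-preserving, which hinges on the fact that an odd denominator forces an odd gcd; once this is in hand, the argument is a direct parity computation.
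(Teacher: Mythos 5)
Your induction is correct: the base case checks, the parity computation for the unreduced mediants $(2p+r)/(2q+s)$ and $(p+2r)/(q+2s)$ is right, the observation that an odd denominator forces an odd reduction factor (and hence that reduction preserves the parity of both numerator and denominator) is exactly the needed subtlety, and the seam check between adjacent parent pairs is handled properly. Note that the paper itself states this lemma without proof, citing it as a known result from \cite{AK}, so there is no in-paper argument to compare against; your proof is the standard inductive argument one would expect and can stand as a self-contained justification.
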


\begin{corollary}
All cross-differences are odd.
\end{corollary}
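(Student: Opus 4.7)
The plan is to deduce the corollary directly from the preceding lemma by a parity analysis of the expression $\mathcal{C}(p/q, r/s) = qr - ps$. The lemma tells us two things: every denominator in a row of the $k=3$ Stern--Brocot sequence is odd, and numerators alternate between even and odd as we move along a row. So for two consecutive fractions $p/q$ and $r/s$ in $SB_i$, both $q$ and $s$ are odd, while exactly one of $p, r$ is even and the other is odd.

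First I would record these parity facts and then split into the two cases based on which of $p$ or $r$ is even. In the case $p$ even and $r$ odd, the product $qr$ is odd $\times$ odd $=$ odd, while $ps$ is even $\times$ odd $=$ even, so $qr - ps$ is odd. In the opposite case $p$ odd and $r$ even, the product $qr$ is odd $\times$ even $=$ even, while $ps$ is odd $\times$ odd $=$ odd, so again $qr - ps$ is odd. Either way the cross-difference is odd.

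Since the two cases are essentially symmetric, this can be abbreviated to the single observation that exactly one of the two products $qr, ps$ has an even factor (coming from the numerators) and the other does not, while the denominator factors contribute only odd numbers. Hence $qr$ and $ps$ have opposite parities, and their difference is odd. This is a short argument and there is no real obstacle; the main content has already been packed into the lemma, and the corollary is an immediate parity bookkeeping consequence.
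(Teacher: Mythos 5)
Your parity argument is correct and is exactly the intended derivation: the paper states this corollary immediately after the lemma on odd denominators and alternating numerators (citing it as a known result without writing out the proof), and the only content is the bookkeeping you supply, namely that $qr$ is the product of two odds or an odd and an even while $ps$ has the opposite parity, so $qr - ps$ is odd. No gaps; this matches the paper's approach.
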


\begin{lemma}
The number of terms in $SB_n$ is $3^{n} +1$.
\end{lemma}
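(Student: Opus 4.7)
The plan is to prove this by straightforward induction on $n$, exploiting the fact that the insertion process adds exactly $k-1 = 2$ mediants between each consecutive pair of fractions from the previous row.

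First I would set up the recurrence. Let $t_n = |SB_n|$ denote the number of terms in the $n$-th Stern-Brocot sequence. By definition $SB_0 = \{0/1, 1/1\}$, so $t_0 = 2$. To pass from $SB_n$ to $SB_{n+1}$, one copies all $t_n$ terms and inserts two new weighted mediants between each of the $t_n - 1$ consecutive pairs. This gives the recurrence
\[
t_{n+1} = t_n + 2(t_n - 1) = 3t_n - 2.
\]

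Next I would verify the closed form $t_n = 3^n + 1$ by induction. The base case $t_0 = 3^0 + 1 = 2$ is immediate. Assuming $t_n = 3^n + 1$, the recurrence yields
\[
t_{n+1} = 3(3^n + 1) - 2 = 3^{n+1} + 1,
\]
completing the induction.

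There is no serious obstacle here; the only subtlety worth mentioning is the justification for the step count $3t_n - 2$, which requires noting that the number of \emph{consecutive pairs} in a sequence of $t_n$ terms is exactly $t_n - 1$. Since the insertion rule is applied to each such pair independently, and no collisions or reductions can change the cardinality (reduction to lowest terms alters the representation of a fraction but not whether it occupies a slot), the recurrence is exact and the closed form follows.
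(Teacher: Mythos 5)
Your proof is correct. The paper itself states this lemma as a known result cited from an earlier work and gives no proof, but your induction via the recurrence $t_{n+1} = 3t_n - 2$ is the standard argument one would expect; your remark that distinct slots cannot collapse is justified since the paper notes each row is strictly increasing.
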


Finally we state the theorem about rational numbers that appear in the Stern-Brocot sequences.

\begin{theorem}
All the rational numbers between $0$ and $1$ such that their representation in the lowest terms has an odd denominator appear in the unit Stern-Brocot sequences.
\end{theorem}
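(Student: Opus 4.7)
I would prove this by a descent argument placing $p/q$ as a weighted mediant in some row. Fix $p/q$ in lowest terms with $0<p<q$ and $q$ odd; starting from $SB_0=\{0/1,1/1\}$, $p/q$ always lies between two consecutive entries $a/b,c/d$ of some $SB_n$. If $p/q$ already appears we are done; otherwise it lies strictly inside $(a/b,c/d)$, and I would track the positive integers $X=bp-aq$ and $Y=qc-pd$. A direct computation shows
\[
\mathrm{sign}\bigl(p/q-M_1\bigr)=\mathrm{sign}(2X-Y),\qquad \mathrm{sign}\bigl(M_2-p/q\bigr)=\mathrm{sign}(2Y-X),
\]
where $M_1=(2a+c)/(2b+d)$ and $M_2=(a+2c)/(b+2d)$ are the unreduced mediants. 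Thus $Y=2X$ or $X=2Y$ means $p/q$ equals $M_1$ or $M_2$, placing $p/q$ in $SB_{n+1}$; otherwise $p/q$ falls strictly into the left, middle, or right subinterval, producing new cross-differences $(X',Y')$ with $X'+Y'$ at most $Y-X$, $X+Y$, or $X-Y$ respectively (equality only when the newly inserted mediants require no reduction).

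To show the descent must terminate, I would observe that $X_n+Y_n$ is a non-increasing sequence of positive integers, strictly decreasing unless we fall into the middle subinterval with no reduction. In that exceptional case the sum is preserved but $|X-Y|$ is tripled, since $(2X-Y)-(2Y-X)=3(X-Y)$; since the middle case itself requires $|X-Y|<(X+Y)/3$, it cannot persist indefinitely unless $X=Y$. So a non-terminating descent must eventually get stuck at some $X_n=Y_n=k$.

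The main obstacle is ruling out this stuck state when $q$ is odd. Suppose $X=Y=k$ at some stage, so $bp-aq=qc-pd=k$. Taking $d$ times the first equation plus $b$ times the second eliminates $p$ to give
\[
q\,\mathcal{C}(a/b,c/d)=k(b+d).
\]
Since $a/b$ and $c/d$ are consecutive in $SB_n$, the Lemma above forces $b$ and $d$ to be odd (so $b+d$ is even), and the Corollary forces $\mathcal{C}(a/b,c/d)$ to be odd; combined with $q$ odd, the left side is odd while the right side is even, a contradiction. Hence $X=Y$ never occurs, the descent must terminate at a mediant equality, and $p/q\in SB_N$ for some $N$.
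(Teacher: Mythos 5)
The paper never proves this theorem: it is listed among the ``known results about 3-Stern-Brocot sequences'' and attributed to the earlier paper \cite{AK}, so there is no in-paper proof to compare yours against. Judged on its own, your argument is correct and self-contained. I checked the computations: with $X=\mathcal{C}(a/b,p/q)=bp-aq$ and $Y=\mathcal{C}(p/q,c/d)=qc-pd$, the signs of $2X-Y$ and $2Y-X$ do locate $p/q$ relative to the unreduced mediants; the bracketing pair updates to $\bigl(X,(Y-2X)/g\bigr)$, $\bigl((2X-Y)/g_1,(2Y-X)/g_2\bigr)$, or $\bigl((X-2Y)/g,Y\bigr)$ in the left, middle, and right cases, so $X+Y$ strictly decreases except in the middle-no-reduction case, where it is preserved and $X-Y$ is exactly tripled; and the membership condition for the middle subinterval ($Y<2X$ and $X<2Y$) is indeed equivalent to $|X-Y|<(X+Y)/3$, so a non-increasing integer sequence $X_n+Y_n$ that never terminates must eventually stall with $X_n=Y_n$. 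The identity $q\,\mathcal{C}(a/b,c/d)=k(b+d)$ is right, and combining it with the paper's Lemma 1 (all denominators odd) and its Corollary (all cross-differences odd) kills the stall exactly where the odd-denominator hypothesis is needed: the example $p/q=1/2$, which sits at $X=Y=1$ between $1/3$ and $2/3$ in every row, shows the stall genuinely occurs for even $q$, so your parity step is not decorative. Two tiny points you could make explicit for the reader: when $2X=Y$ the fraction $p/q$ equals $M_1$ as a rational number and hence equals the inserted entry because both are in lowest terms; and in the middle case the conclusion $X'+Y'\le X+Y$ does not depend on the left and right mediants reducing by the same factor, so you need not invoke the paper's symmetry-of-reduction lemma.
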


\section{Reduction and Cross-Differences}\label{sec:RandCDs}

In the classical $k=2$ Stern-Brocot case, the cross-difference of the initial terms is 1. It follows that mediants are always in lowest terms. 

In the $k=3$ case, this is no longer true. For example, the second row of $SB(\frac{0}{1}, \frac{1}{1})$  contains two consecutive entries $1/3$ and $4/9$. Their weighted mediants before reduction are: $6/15$ and $9/21$. They both are reduced by 3 and the new entries in the third row are $2/5$ and $3/7$.

There is a simple but important connection between the reduction and cross-differences.

\begin{lemma}\label{thm:red-cd}
The reduction factor of the left/right mediants of two fractions $\frac{a}{b}$ and $ \frac{c}{d}$ divides their cross-difference $\mathcal{C}\left(\frac{a}{b},\frac{c}{d}\right) = bc-ad$.
\end{lemma}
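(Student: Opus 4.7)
My plan is to show, in each of the two cases, that the cross-difference $bc-ad$ can be written as an integer linear combination of the numerator and denominator of the \emph{unreduced} weighted mediant. Once this is established, any common divisor of that numerator and denominator — in particular the reduction factor, which is by definition $\gcd$ of the two — must divide the linear combination, hence divide $bc-ad$.

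For the left mediant $\frac{2a+c}{2b+d}$, the step to carry out is the direct computation
\[
b(2a+c) - a(2b+d) = 2ab + bc - 2ab - ad = bc - ad.
\]
For the right mediant $\frac{a+2c}{b+2d}$, the analogous combination is
\[
c(b+2d) - d(a+2c) = bc + 2cd - ad - 2cd = bc - ad.
\]
In each case, the right-hand side is the cross-difference $\mathcal{C}\!\left(\tfrac{a}{b},\tfrac{c}{d}\right)$, and the left-hand side is a $\mathbb{Z}$-linear combination of the numerator and denominator of the unreduced mediant. So if $g$ denotes the $\gcd$ of that numerator and denominator (i.e.\ the reduction factor), then $g\mid bc-ad$, which is what we want.

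There is essentially no obstacle here beyond noticing the right linear combinations; the hardest part is just being disciplined about treating both the left and the right mediant symmetrically. One could also frame the argument through the determinant identity
\[
\det\begin{pmatrix} 2a+c & a \\ 2b+d & b \end{pmatrix} = ad-bc, \qquad \det\begin{pmatrix} a+2c & c \\ b+2d & d \end{pmatrix} = ad-bc,
\]
which makes transparent why the same cross-difference controls both mediants and would generalize cleanly to arbitrary weighted mediants $\frac{ma+nc}{mb+nd}$ (the relevant determinant equals $m(ad-bc)$ on one side and $n(ad-bc)$ on the other, so for our $k=3$ case with $m,n\in\{1,2\}$ we obtain $ad-bc$ up to sign and a factor of $1$ or $2$; the factor $2$ case is harmless since, by the preceding corollary, cross-differences are odd and denominators of mediants are odd). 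I would, however, present the bare two-line verification above, since it is self-contained and immediate.
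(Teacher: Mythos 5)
Your proof is correct and follows essentially the same route as the paper: both arguments observe that $b(2a+c)-a(2b+d)=bc-ad$ is an integer linear combination of the unreduced mediant's numerator and denominator, so the reduction factor (their gcd) divides it. You additionally write out the right-mediant combination explicitly where the paper only gestures at it, but the idea is identical.
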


\begin{proof}
Suppose the left mediant $\frac{2a + c}{2b + d}$ is reducible. That is, $2a+c = gp$, and $2b+d = gq$, where $g = \gcd(2a+c,2b+d)$ is the reduction factor. Multiplying the first equality by $b$ and the second by $a$ and subtracting them, we get
$$bc -da = 2ab+cb-2ba-da = gpb-gqa = g(pb-qa).$$
That is $g$ divides $bc-ad$. Similar reasoning shows the same is true for the right mediant.
\end{proof}

From now we will focus not on the rows of the  Stern-Brocot sequences, but rather on the rows of cross-differences.

\section{No Reduction}\label{sec:noRed}

Let us first consider a simple question: what are the cross-differences of the Stern-Brocot sequences which are obtained from the starting terms $\frac{0}{1}$ and $\frac{1}{1}$, except where we do not reduce fractions? In this case, consecutive fractions $\frac{a}{b}$ and $\frac{c}{d}$ in any row become the fractions $\frac{a}{b}$, $\frac{2a + c}{2b + d}$, $\frac{a + 2c}{b + 2d}$, and $\frac{c}{d}$ in the next row. Since these fractions are never reduced, their pairwise consecutive cross-differences are $(bc - ad)$, $3(bc - ad)$, and $(bc - ad)$. Thus the rows of consecutive cross-differences evolve according to a simple propagation rule:

\noindent
\textbf{No-Reduction Propagation Rule.} An instance of $C$ in row $r$ becomes $C$, $3C$, $C$ in row $r + 1$. 

The single cross-difference in row zero is $1$, so in the no-reduction case all cross-differences are powers of $3$. For instance, the cross-differences in the first row are: $1$, $3$, $1$ and the cross-differences in the second row are: $1$, $3$, $1$, $3$, $9$, $3$, $1$, $3$, $1$. 

To make it easy to visualize the rows of the cross-differences, we take the base-$3$ logarithm of every cross-difference and present them as a graph. Figure~\ref{fig:1and2} shows the first and the second row; the graphs have been rescaled to the same size to emphasize the differences in shape. 

\begin{figure}[H]
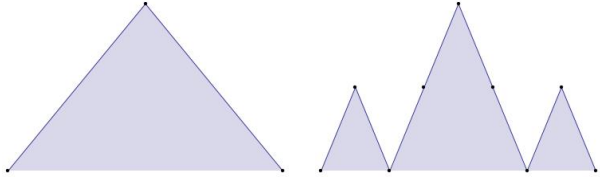

\centering
$\begin{array}{cc}
\includegraphics[scale=0.3]{cdl1.jpg} &
\includegraphics[scale=0.3]{cdl2.jpg}
\end{array}$
\caption{The first and the second row of cross-differences}\label{fig:1and2}
\end{figure}

Figure~\ref{fig:3and4NoRed} shows the third and fourth row. It is easy to see how each figure is generated from the previous: the left third and  right third of the graph are copies of the previous row, while the middle third is the same copy moved up by 1. The graphs exhibit a self-similar, fractal-like structure. 

\begin{figure}[H]
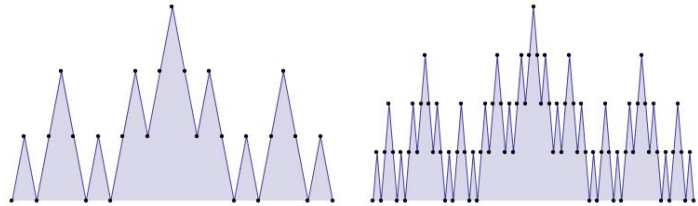

\centering
$\begin{array}{cc}
\includegraphics[scale=0.35]{NoReduction3.jpg} &
\includegraphics[scale=0.35]{NoReduction4.jpg}
\end{array}$
\caption{The third and the fourth row of cross-differences}\label{fig:3and4NoRed}
\end{figure}

We can explicitly specify the value at the $i^{\text{th}}$ index in each row of cross-differences, where indexing begins at $0$:

\begin{lemma}\label{lemma:ternary}
In the no-reduction case, the $i$-th cross-difference is $3^{n(i)}$ where $n(i)$ is the number of $1$s in the ternary expansion of $i$.
\end{lemma}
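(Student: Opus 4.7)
The plan is a straightforward induction on the row number $n$, using the No-Reduction Propagation Rule stated just above the lemma. The base case $n=0$ is immediate: the single cross-difference is $1 = 3^0$, and $0$ has no $1$s in its ternary expansion. So the entire content is in translating the propagation rule into the ternary language.

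Let me denote by $C_n(i)$ the $i$-th cross-difference in row $n$, with $0 \le i \le 3^n - 1$. Since row $n+1$ has $3^{n+1}$ entries and is obtained by replacing each entry of row $n$ with a block of three, the position $i$ in row $n+1$ comes from the entry at position $\lfloor i/3 \rfloor$ in row $n$. Writing $i = 3j + r$ with $r \in \{0,1,2\}$, the No-Reduction Propagation Rule translates to
\[
C_{n+1}(3j+0) = C_n(j), \qquad C_{n+1}(3j+1) = 3\,C_n(j), \qquad C_{n+1}(3j+2) = C_n(j).
\]
In other words, appending a trit $r$ to the ternary representation of $j$ multiplies the cross-difference by $3$ when $r=1$ and leaves it unchanged when $r=0$ or $r=2$.

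Now observe that appending the trit $r$ to $j$ increases the count of $1$s in the ternary expansion by exactly $1$ if $r=1$ and by $0$ otherwise; that is, $n(3j+r) = n(j) + [r=1]$. Combining this with the recursion above, the inductive step is trivial: assuming $C_n(j) = 3^{n(j)}$ for every valid $j$, each of the three cases $r\in\{0,1,2\}$ yields $C_{n+1}(3j+r) = 3^{n(3j+r)}$, completing the induction.

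There is no real obstacle here; the only mild subtlety is keeping the indexing consistent between rows (namely, that $i$ written in base $3$ with $n+1$ digits has its parent at the position obtained by chopping off the last trit), but this is built directly into the propagation rule. No reduction occurs by hypothesis, so Lemma~\ref{thm:red-cd} and any analysis of $\gcd$'s are not needed.
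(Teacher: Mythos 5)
Your proof is correct and follows essentially the same route as the paper's: induction on the row number, translating the No-Reduction Propagation Rule into the observation that $n(3i) = n(3i+2) = n(i)$ while $n(3i+1) = n(i)+1$. No differences worth noting.
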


\begin{proof}
We argue by induction. The base case of the is trivial, so let us suppose the claim holds for row $n$. By the propagation rule above, the values at indices $3i$, $3i + 1$, and $3i + 2$ of row $(n + 1)$ are $C$, $3C$, and $C$ respectively, where $C$ is the value at index $i$ in row $n$. The induction step now follows from the fact that $n(3i) = n(3i + 2) = n(i)$ whereas $n(3i + 1) = n(i) + 1$.
\end{proof}

We would like to emphasize two properties of the cross-differences of the sequences without reduction that will survive the transition to reduction.

\textbf{Property 1.} The value of the cross-difference is the same for all the values of $i$ with the same set of ones in the ternary expression.

\textbf{Property 2.} The values of the cross-differences at index $i$ is the same for every row that contains this index.

The second property allows us to view the row of cross-differences as a single infinite sequence which is the union of all the sequences.

Before proceeding to the reduction case, let us examine another graphical representation of cross-differences. We  divide the interval $[0,1]$ into $3^i$ equal intervals and define a piece-wise constant function which, on the $i^{\text{th}}$ interval, is equal to the base-$3$ logarithm of the $i$-th cross-difference. For example, the second row corresponds to the function in Figure~\ref{fig:NoRedFlat2}.

\begin{figure}[htbp]
\centering
\includegraphics[width=5cm, height=2cm]{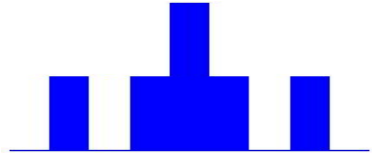}
\caption{A piece-wise constant representation of the second row.}\label{fig:NoRedFlat2}
\end{figure}

Observe that the intervals where the function vanishes on the $i^{\text{th}}$ row form the $i^{\text{th}}$ iteration of the Cantor set. The first several iterations of the Cantor set are shown in Figure~\ref{fig:CantorSet}.
\newline

\begin{figure}[htbp]
\centering
\includegraphics[width=5cm, height=1cm]{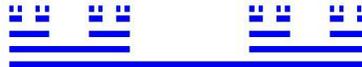}
\caption{Iterations of the Cantor Set.}\label{fig:CantorSet}
\end{figure}

\section{Reduction}\label{sec:utRed}

We now return to the case with reduction. When a fraction is reduced by a factor of $m$, both cross-differences in which it participates are divided by $m$. It follows by induction that even in the case where fractions are reduced, all cross-differences are powers of $3$.

Our ultimate goal is to give an explicit description (as in Lemma~\ref{lemma:ternary}) which characterizes the cross-differences in Stern-Brocot sequences with reduction. To do so, it will be critical to understand precisely where, and by what factor, fractions are reduced. 

To that end, let us consider the Stern-Brocot sequences modulo $9$. More formally, we replace each fraction $a/b$ with $x/y$, where $x$, $y$ are the residues of $a$, $b$ respectively modulo $9$. For every fraction $a/b \pmod{9}$ which occurs in the Stern-Brocot sequences, there are $17$ distinct possibilities modulo $9$ for the fraction which follows it:

\begin{enumerate}
    \item The cross-difference is 1 modulo 9. There are 9 such pairs.
    \item The cross-difference is 3 modulo 9. There are 6 such pairs.
    \item The cross-difference is 0 modulo 9, in which case $c/d= a/b$ modulo 9 or $c/d=(9-a)/(9-b)$ modulo 9.
\end{enumerate}

The third point is noteworthy since it means that not all possible fractions $c/d \pmod{9}$ with $(bc - ad) \equiv 0 \pmod{9}$ can follow $a/b$ in a Stern-Brocot sequence.

\begin{lemma}\label{thm:mod9}
If the cross-difference of two consecutive fractions $a/b$ and $c/d$ is divisible by 9, then either $(c-a)/(d-b)= 0/0$ modulo 9 or $(c+a)/(d+b)= 0/0$ modulo 9.
\end{lemma}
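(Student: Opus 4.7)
The plan is to prove Lemma~\ref{thm:mod9} by strengthening it to a statement that is amenable to induction on the row index. The strengthened claim is: for every $k \geq 1$ and every pair of consecutive fractions $a/b, c/d \in SB_n$, if $3^k \mid bc - ad$ then $(c, d) \equiv \epsilon (a, b) \pmod{3^k}$ for some $\epsilon \in \{+1, -1\}$. The stated lemma is the case $k = 2$, with $\epsilon = +1$ giving the first alternative and $\epsilon = -1$ giving the second.

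The base case $n = 0$ is vacuous, since the only cross-difference is $1$. For the inductive step, the consecutive pair $a/b, c/d \in SB_{n+1}$ arises from some consecutive pair $p/q, r/s \in SB_n$ through the insertion of the mediants $M_L = (2p+r)/(2q+s)$ and $M_R = (p+2r)/(q+2s)$, each possibly reduced. I would first establish a preparatory structural claim: the reduction factors $g_L$ of $M_L$ and $g_R$ of $M_R$ coincide, their common value $g$ lies in $\{1, 3\}$, and $g = 3$ exactly when $(r, s) \equiv (p, q) \pmod 3$. The divisibility by $3$ is controlled by the equivalences $3 \mid 2p + r \iff r \equiv p \pmod 3 \iff 3 \mid p + 2r$, and similarly for the denominator components. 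The upper bound $g \leq 3$ combines Lemma~\ref{thm:red-cd}, the strengthened inductive hypothesis (which gives $(r, s) \equiv \pm (p, q) \pmod 9$ when $m := v_3(qr - ps) \geq 2$), and the coprimality $\gcd(p, q) = 1$: in either sign, $9 \mid 2p + r$ and $9 \mid 2q + s$ simultaneously would force both $p$ and $q$ to be divisible by $3$.

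With this preparation, the three new consecutive pairs $(p/q, M_L), (M_L, M_R), (M_R, r/s)$ have cross-differences $\mathcal{C}/g, \; 3\mathcal{C}/g^2, \; \mathcal{C}/g$, where $\mathcal{C} = qr - ps$. I then handle three cases based on $m$ and the sign $\epsilon_p$ supplied by the inductive hypothesis for $(p/q, r/s)$. When $m = 0$, only the middle new pair has cross-difference divisible by $3$, and the computation $M_L + M_R = (3(p+r), 3(q+s))$ gives $\epsilon_{\mathrm{new}} = -1$. When $m \geq 1$ and $\epsilon_p = +1$, we have $g = 3$, and substituting $(r, s) \equiv (p, q) \pmod{3^m}$ into each of the three subcases produces component-wise differences divisible by $3^{m-1}$, so $\epsilon_{\mathrm{new}} = +1$. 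When $m \geq 1$ and $\epsilon_p = -1$, we have $g = 1$; the outer two new pairs inherit $\epsilon_{\mathrm{new}} = +1$ directly from $p + r \equiv 0 \pmod{3^m}$, while the middle pair satisfies $c + a = 3(p + r)$ and $d + b = 3(q + s)$, both divisible by $3^{m+1}$, yielding $\epsilon_{\mathrm{new}} = -1$ at the elevated valuation $m + 1$.

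The main obstacle will be the preparatory claim about $g_L$ and $g_R$, and in particular the upper bound $g \leq 3$, because it requires combining Lemma~\ref{thm:red-cd}, the strengthened inductive hypothesis, and coprimality of numerator and denominator. Once that is in hand, the three-way case analysis is a matter of direct substitution into the mediant formulas.
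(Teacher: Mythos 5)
Your proposal is correct, and it takes a genuinely different route from the paper. The paper proves the mod-$9$ statement directly by induction on the row number, splitting into two cases: either $a/b$ and $c/d$ are the unreduced left and right mediants of parents with cross-difference exactly $3$ (handled by a finite residue check mod $3$), or the parents' cross-difference is already divisible by $9$ (handled by applying the lemma to the parents and then tracking the mediants, using what becomes Corollary~\ref{thm:redcd9}). You instead prove the stronger, $3$-adically uniform statement that $3^k \mid bc-ad$ forces $(c,d)\equiv\pm(a,b)\pmod{3^k}$ for every $k$, and run one induction for all $k$ simultaneously; the lemma is the case $k=2$. This buys real dividends: your preparatory claim about $g_L=g_R\in\{1,3\}$ recovers Theorem~\ref{thm:red1and3} and the symmetry lemma as byproducts, and more importantly the stronger hypothesis is exactly what is needed to close the inductive step in the delicate subcase where the parents agree modulo $9$ and their cross-difference is divisible by $27$ --- there one must know the parents agree modulo $27$ (not merely that $27$ divides the cross-difference) to conclude that the reduced mediants agree with them modulo $9$, and your mod-$3^k$ hypothesis supplies precisely that, where the paper's proof waves at it with ``it's not hard to check.'' Your case analysis ($m=0$; $m\ge 1$ with $\epsilon=+1$, hence $g=3$ and all three new valuations drop to $m-1$; $m\ge 1$ with $\epsilon=-1$, hence $g=1$ and the middle valuation rises to $m+1$) is complete and the substitutions all verify. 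Two small points to make explicit in a full write-up: (a) to get $g\in\{1,3\}$ from ``$9\nmid g$'' you need $g$ to be a power of $3$, which follows from Lemma~\ref{thm:red-cd} together with the fact (noted at the start of Section~\ref{sec:utRed}) that all cross-differences are powers of $3$; and (b) for $k\ge 1$ the sign $\epsilon$ in your strengthened claim is unique, since $(a,b)\equiv-(a,b)\pmod 3$ would force $3\mid\gcd(a,b)$, so ``the sign supplied by the inductive hypothesis'' is well defined and consistent across all levels $k\le m$.
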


\begin{proof}
There are two ways for consecutive fractions $a/b$ and $c/d$ with  cross-difference divisible by 9 to appear in a Stern-Brocot sequence:
\begin{enumerate}
    \item They are the left and the right mediant of two fractions with cross-difference equal to $3$, and neither was reduced.
    \item The cross-difference of their parents is divisible by 9.
\end{enumerate}

\textbf{Case 1:} Suppose $p/q$ and $r/s$ are the parents of $a/b$, $c/d$. If $p$ or $r$ is divisible by $3$, then the other must be as well. Since there was no reduction, this means $a + c = (2p + r) + (p + 2r) = 3(p + r)$ is divisible by $9$. Now $q$ and $s$ must leave distinct (non-zero) residues modulo $3$, for otherwise $b = 2q + s$ and $d = q + 2s$ would be divisible by $3$ and the mediants would reduce. Then $q + s \equiv 0 \pmod{3}$, so that $b + d = (2q + s) + (q + 2s) = 3(q + s)$ is divisible by $9$ as well. Thus the claim holds in this case, and analogous reasoning applies to the case where $q$ or $s$ is divisible by $3$. 

Hence we can assume none of $p$, $q$, $r$, $s$ is divisible by $3$. As before, we only need to check one representative modulo 3. There are 16 possibilities for $(p, q, r, s) \pmod{3}$. Up to  symmetry -- we can swap $p/q$ with $r/s$, and also swap numerators with denominators -- it therefore suffices to consider the following cases:
$(1,1,1,1)$, $(1,1,1,2)$, $(1,1,2,2)$,
$(1,2,1,2)$,
$(1,2,2,1)$,
$(1,2,2,2)$, and $(2,2,2,2)$. However, we can exclude cases when $p/q \equiv r/s \pmod{3}$ since these cases result in a reduction. We can also exclude cases when the  cross-difference is not divisible by 3. We are left with just two possibilities: $(1,1,2,2)$ and 
$(1,2,2,1)$.

Both of these possibilities have $p + r \equiv q + s \equiv 0 \pmod{3}$. Then $a/b = (2p + r)/(2q + s)$ and $c/d = (p + 2r)/(q + 2s)$, whence $a + c = (2p + r) + (p + 2r) = 3(p + r)$ and $b + d = (2q + s) + (s + 2q) = 3(q + s)$ are both divisible by $9$. Thus the claim holds in this case. 

\textbf{Case 2:} In this case we cannot assume that $a/b$ and $c/d$ are the left and the right mediant of some pair of fractions in the previous row, since one of them might have been in the previous row. However, it still makes sense to speak of the parents of $a/b$ and $c/d$, where we simply mean the pair of fractions in the previous row between which $a/b$ and $c/d$ lie (inclusive at either endpoint). 

Suppose the parents' cross-difference is divisible by 9. By induction on the row number we can assume the lemma holds in the previous row, so that the parents $p/q$ and $r/s$ either have equal or  complementary remainders modulo 9. First suppose $p/q \equiv r/s \pmod{9}$; then the left and the right mediants will be reduced by a factor of exactly $3$. Indeed, we have $2p + r  \equiv p + 2r \equiv 0 \pmod{3}$ and similarly $2q + s \equiv q + 2s \equiv 0 \pmod{3}$ so the mediants are reduced by a factor of at least $3$. On the other hand, we need $3 | p, q, r, s$ in order to reduce by a factor of $9$, which contradicts the fact that $p/q$, $r/s$ are reduced fractions. 

Then in order for $9 | (bc - ad)$ we must in fact have $27 | (qr - ps)$. It's not hard to check now that numbers in the next row are all equal to of $p/q$ modulo $9$. Thus every pair of consecutive numbers is the same modulo $9$.

Suppose instead that the parents $p/q$ and $r/s$ are complementary modulo $9$, i.e. $(p+r)/(q+s) = 0/0 \pmod{9}$. Then the left mediant and right mediant modulo $9$ are $p/q$ and $r/s$ respectively, so there is no reduction. The four fractions in the next row are $p/q$, $p/q$, $r/s$, and $r/s$ modulo $9$. Every consecutive pair here is either the same or complementary modulo $9$, as required.
\end{proof}

As part of the proof of Lemma~\ref{thm:mod9} we established the following fact:

\begin{corollary}\label{thm:redcd9}
If the cross-difference of two fractions is divisible by 9 and they are the same modulo 9, then both new mediants are reduced by 3. If the fractions are complementary modulo 9, then there is no reduction in new mediants.
\end{corollary}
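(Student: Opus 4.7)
The plan is to extract the two modular computations that appear implicitly in Case~2 of the proof of Lemma~\ref{thm:mod9}, and combine them with Lemma~\ref{thm:red-cd} together with the remark (made at the start of Section~\ref{sec:utRed}) that every cross-difference, and hence every reduction factor, is a power of~$3$.

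For the first claim, I would suppose $a\equiv c$ and $b\equiv d\pmod 9$ and evaluate the mediant numerator and denominator modulo~$9$. This yields $2a+c\equiv 3a$ and $2b+d\equiv 3b\pmod 9$, so the reduction factor of the left mediant is divisible by~$3$. To promote this to equality, I would invoke Lemma~\ref{thm:red-cd}: the reduction factor divides the cross-difference, which is a power of~$3$, so the factor itself is a power of~$3$. It is thus either~$3$ or at least~$9$; the latter would require $9\mid 3a$ and $9\mid 3b$, forcing $3\mid a$ and $3\mid b$, which contradicts that $a/b$ is in lowest terms. The right mediant is handled by the analogous computation $a+2c\equiv 3c$, $b+2d\equiv 3d\pmod 9$, using $\gcd(c,d)=1$ to rule out reduction by~$9$.

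For the second claim, I would suppose $a+c\equiv 0$ and $b+d\equiv 0\pmod 9$ and again compute residues of the mediants. This gives $2a+c\equiv a$ and $2b+d\equiv b\pmod 9$, so the left mediant reduces to $(a,b)$ modulo~$9$. Since $\gcd(a,b)=1$, not both of $a,b$ can be divisible by~$3$, so not both of $2a+c, 2b+d$ are divisible by~$3$. Hence the reduction factor---again a power of~$3$ by Lemma~\ref{thm:red-cd}---is not divisible by~$3$, and must equal~$1$. The right mediant has residues $(c,d)$ modulo~$9$ and is treated symmetrically using $\gcd(c,d)=1$.

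I do not anticipate a genuine obstacle: the corollary is essentially a restatement of intermediate claims used inside Case~2 of Lemma~\ref{thm:mod9}. The only point that requires care is distinguishing \emph{reduction factor is a power of~$3$} (immediate from Lemma~\ref{thm:red-cd} combined with cross-differences being powers of~$3$) from \emph{reduction factor equals~$3$} (which requires the coprimality of $a,b$ to exclude reduction by a higher power of~$3$). Once that distinction is made, each case reduces to a one-line residue computation.
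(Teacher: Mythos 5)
Your proposal is correct and follows essentially the same route as the paper, which obtains this corollary as a byproduct of the residue computations in Case~2 of the proof of Lemma~\ref{thm:mod9} (mediant numerators and denominators congruent to $3a, 3b$ in the equal case and to $a, b$ resp.\ $c, d$ in the complementary case, with coprimality ruling out reduction by $9$). Your only addition is to make explicit the step that the reduction factor is a power of $3$ via Lemma~\ref{thm:red-cd}, which the paper leaves implicit.
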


We are now ready to prove the main theorem about reduction.

\begin{theorem}\label{thm:red1and3}
When fractions in the Stern-Brocot sequences reduce non-trivially, they do so by a factor of exactly $3$. 
\end{theorem}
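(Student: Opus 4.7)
The plan is to combine Lemma~\ref{thm:red-cd} with Lemma~\ref{thm:mod9} and Corollary~\ref{thm:redcd9}, doing a short case analysis on the size of the cross-difference. The paper has already established, by induction on the row, that every cross-difference in a Stern-Brocot row is a power of $3$. By Lemma~\ref{thm:red-cd}, the reduction factor of each new mediant divides this cross-difference, so it too is a power of $3$. The whole content of the theorem is that this power is always $0$ or $1$.

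Fix consecutive fractions $a/b$ and $c/d$ in some row $SB_n$ (already in lowest terms by construction), with cross-difference $\mathcal{C}(a/b,c/d) = 3^k$, and consider the two mediants $(2a+c)/(2b+d)$ and $(a+2c)/(b+2d)$ that are inserted between them in $SB_{n+1}$. I would split on $k$. If $k = 0$, Lemma~\ref{thm:red-cd} forces the reduction factor to divide $1$, so both mediants are already in lowest terms. If $k = 1$, Lemma~\ref{thm:red-cd} forces the reduction factor to divide $3$, so it is $1$ or $3$, which is exactly what the theorem demands.

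The only substantive case is $k \geq 2$, i.e. $9 \mid \mathcal{C}(a/b,c/d)$. Here I would invoke Lemma~\ref{thm:mod9}, which asserts that $a/b$ and $c/d$ are either equal modulo $9$ or complementary modulo $9$. Corollary~\ref{thm:redcd9} then finishes the job: in the equal case, both mediants are reduced by exactly $3$; in the complementary case, neither mediant reduces at all. Either way, the reduction factor is $1$ or $3$.

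The main obstacle has already been absorbed into Lemma~\ref{thm:mod9} and its corollary, so at this stage the theorem is essentially a bookkeeping step; the only things to watch are that $a/b$, $c/d$ are in lowest terms (which is the inductive hypothesis on rows and is used implicitly to rule out a reduction factor of $9$ inside a parent during the proof of the $\bmod\,9$ lemma) and that the base row $SB_0$ has cross-difference $1$, so its mediants $1/3$ and $2/3$ are already reduced and the induction starts cleanly.
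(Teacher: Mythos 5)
Your proposal is correct and follows essentially the same route as the paper: both arguments observe that a reduction by more than $3$ would force $9 \mid bc - ad$ (via the linear-combination/divisibility fact of Lemma~\ref{thm:red-cd}), and then invoke Corollary~\ref{thm:redcd9} to see that in that case the reduction is either exactly $3$ or absent. Your version merely makes explicit the small cases $k=0,1$ and the power-of-$3$ bookkeeping that the paper leaves implicit.
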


\begin{proof}
In order for a fraction to reduce by a higher power of $3$, we require the parents' cross-difference be divisible by $9$. Indeed, consider the left mediant $(2a+c)/(2b+d)$. If 9 divides both $2a+c$ and $2b+d$, then it divides their linear combination: $b(2a+c) - a(2b+d) = bc-ad$. The theorem now is the consequence of Corollary~\ref{thm:redcd9}.
\end{proof}

As a consequence, reduction in the Stern-Brocot sequences is symmetric.

\begin{lemma}
The left mediant reduces by the same factor as the right mediant reduces.
\end{lemma}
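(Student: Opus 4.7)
The plan is to combine Theorem~\ref{thm:red1and3}, which restricts non-trivial reductions to exactly a factor of $3$, with a short congruence calculation showing that divisibility by $3$ of the left mediant's numerator is equivalent to divisibility by $3$ of the right mediant's numerator, and likewise for the denominators. Together these will force the left and right mediants either both to reduce by $3$ or both to already be in lowest terms, which is exactly the claim.

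For the key arithmetic, I would work modulo $3$ with parents $a/b$ and $c/d$. Since $2 \equiv -1 \pmod{3}$, one has $2a + c \equiv c - a \pmod{3}$ and $a + 2c \equiv a - c \pmod{3}$, so
\[
3 \mid 2a + c \iff a \equiv c \pmod{3} \iff 3 \mid a + 2c,
\]
and the identical equivalence with $b, d$ in place of $a, c$ holds for the denominators. Hence
\[
3 \mid \gcd(2a + c,\, 2b + d) \iff a \equiv c \pmod{3} \text{ and } b \equiv d \pmod{3} \iff 3 \mid \gcd(a + 2c,\, b + 2d),
\]
so the left mediant $(2a + c)/(2b + d)$ is reducible if and only if the right mediant $(a + 2c)/(b + 2d)$ is reducible.

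Combined with Theorem~\ref{thm:red1and3}, which rules out reduction by any factor other than $1$ or $3$, this equivalence shows both mediants share the same reduction factor: either both are already in lowest terms, or both reduce by exactly $3$. I do not expect any serious obstacle, since the substantive restriction to reductions by $3$ is already done by Theorem~\ref{thm:red1and3}; the only point to be careful about is that the parents $a/b$ and $c/d$ are in lowest terms so that the mediants are well-defined as described, but this is built into the definition of the Stern-Brocot process in Section~\ref{sec:defs}.
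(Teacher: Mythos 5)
Your proposal is correct and follows essentially the same route as the paper: both arguments reduce to the observation that a mediant is reducible precisely when $a \equiv c$ and $b \equiv d \pmod{3}$ (a condition symmetric in the two mediants), and then invoke Theorem~\ref{thm:red1and3} to pin the common factor at exactly $3$. The only difference is that you spell out the mod-$3$ computation explicitly, whereas the paper cites it as previously established.
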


\begin{proof}
We saw before that the reduction happens precisely when $a/b$ and $c/d$ have the same remainders modulo $3$. Then the left and right mediant will always reduce at the same time, and by Theorem~\ref{thm:red1and3} they reduce by the same factor.
\end{proof}

Theorem~\ref{thm:red1and3} tells us something about the values we see in Stern-Brocot sequences. Suppose $a/b$ and $c/d$ are consecutive fractions in row $i$. Then if there is no reduction, both new mediants have numerators that are more than $\max(a,c)$ and, similarly, denominators that are more than $\max(b, d)$. If there was a reduction and new mediants are $x/y$ and $z/w$, then the numerators are consecutive: either increasing $a < x < z < b$, or decreasing $a > x > z > b$. The analogous statement for denominators holds as well.

\section{Cross-differences}\label{sec:CDs}

We continue our discussion of cross-differences in the Stern-Brocot sequences with reduction. 

Denote by $C_{n}$ the ordered list of cross-differences of adjacent pairs in the Stern-Brocot sequence of order $n$, so that $C_0 = \{1\}$, $C_1 = \{1, 3, 1\}$, and $C_2 = \{1, 3, 1, 3, 9, 3, 1, 3, 1\}$. We see that these two rows are the same as the rows without reduction (see Figure~\ref{fig:1and2}). 

Starting from the next row, a different picture emerges. Figure~\ref{fig:3Red} shows the base-$3$ logarithm of $C_3 = \{1, 3, 1, 3, 9, 3, 1, 3, 1, 1, 1, 1, 9, 27, 9, 1, 1, 1, 1, 3, 1, 3, 9,
3, 1, 3, 1\}.$

\begin{figure}[htbp]
\centering
\includegraphics[scale=0.35]{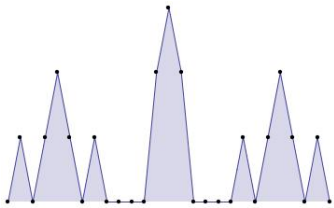}
\caption{The third row of cross-differences}\label{fig:3Red}
\end{figure}

Figures~\ref{fig:4Red} and~\ref{fig:5Red} show $C_4$ and $C_5$ respectively.

\begin{figure}[htbp]
\centering
\includegraphics[scale=0.4]{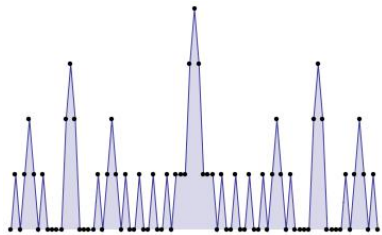}
\caption{The fourth row of cross-differences}\label{fig:4Red}
\end{figure}

\begin{figure}[htbp]
\centering
\includegraphics[scale=0.5]{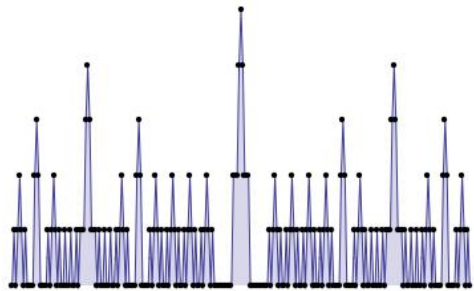}
\caption{The fifth row of cross-differences}\label{fig:5Red}
\end{figure}

We see that each new picture is divided into three parts. The first part and the last part are copies of the previous picture. We will prove this later, but for now we simply note the similarity to the no-reduction case. The middle part is some adjustment of the previous picture. These pictures look fractal-like, but are not quite fractals. We call them \textit{quasi-fractals}.

In this section we will give, with proof, a rule which describes how to get from $C_n$ to $C_{n + 1}$. It is natural to think of replacing each value in $C_n$ with three values to obtain $C_{n + 1}$, since a value in $C_n$ is the cross-difference of two consecutive fractions $a/b$ and $c/d$, which turn into four fractions (and hence, three cross-differences) when the two mediants are inserted in row $n + 1$. The rule to go from one value in $C_n$ to the corresponding three values in $C_{n + 1}$ is quite simple, and given by the following lemma.

\begin{lemma}\label{localpropagation}
The cross-difference $V$ in one row is replaced in the next row by either $(V,3V,V)$ or $(V/3,V/3,V/3)$.
\end{lemma}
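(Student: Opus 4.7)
The plan is to split the problem into two independent parts: (i) compute the three cross-differences of the four fractions $a/b$, $(2a+c)/(2b+d)$, $(a+2c)/(b+2d)$, $c/d$ \emph{before} any reduction takes place, and (ii) use Theorem~\ref{thm:red1and3} and the lemma that follows it to see exactly how reduction rescales these three values.

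For step (i), I would just do the algebra directly. Let $a/b$ and $c/d$ be consecutive in $SB_n$ with cross-difference $V=bc-ad$. The outer cross-differences $b(2a+c)-a(2b+d)$ and $(b+2d)c-(a+2c)d$ both telescope to $bc-ad=V$, and the middle cross-difference $(2b+d)(a+2c)-(2a+c)(b+2d)$ expands to $3(bc-ad)=3V$. So before reduction, the triple is always $(V, 3V, V)$, independent of the arithmetic of $a,b,c,d$.

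For step (ii), Theorem~\ref{thm:red1and3} tells us that each inserted mediant either does not reduce or reduces by a factor of exactly $3$, and the symmetry lemma following it tells us that the two mediants reduce by the \emph{same} factor. This leaves only two cases. If neither mediant reduces, the triple stays $(V, 3V, V)$. If both reduce by $3$, then each outer cross-difference picks up exactly one factor of $3$ in its denominator (only one of the two adjacent fractions had its numerator and denominator divided by $3$), giving $V/3$; the middle cross-difference picks up two such factors (one from each mediant), giving $3V/3^2 = V/3$. So in this case the triple becomes $(V/3, V/3, V/3)$.

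There is no real obstacle here: the content is entirely carried by the previously established Theorem~\ref{thm:red1and3} and its symmetry corollary, and the remainder is a short computation plus a two-case split. The only thing to be careful about is bookkeeping the powers of $3$: one cancellation for each outer cross-difference versus two cancellations for the middle one, which is precisely what makes the reduced triple flat at $V/3$ rather than, say, $(V/3, V, V/3)$.
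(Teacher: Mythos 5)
Your proof is correct and is essentially the argument the paper intends: the paper's one-line proof implicitly relies on the same pre-reduction computation $(V,3V,V)$ (carried out in the no-reduction section) together with Theorem~\ref{thm:red1and3} and the symmetry lemma, and you have simply made the power-of-$3$ bookkeeping explicit. No issues.
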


\begin{proof}
The first case occurs when mediants do not reduce, the second when they do.
\end{proof}

It remains to determine precisely when each rule applies. Let us denote the $i^{\text{th}}$ entry in row $C_{n}$ as $C_n(i)$. At least one of the cases is straightforward:

\begin{lemma}\label{fractal1}
If $C_n(i) = 1$, then $$(C_{n + 1}(3i), C_{n + 1}(3i + 1), C_{n + 1}(3i + 2)) = (1, 3, 1).$$
\end{lemma}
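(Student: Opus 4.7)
The plan is to deduce this directly from Lemma~\ref{localpropagation}, which tells us that any entry $V$ of $C_n$ is replaced in $C_{n+1}$ by either the no-reduction triple $(V, 3V, V)$ or the reduction triple $(V/3, V/3, V/3)$. Setting $V = 1$ leaves only one viable option: the second triple would be $(1/3, 1/3, 1/3)$, which is forbidden because every entry of $C_{n+1}$ is a positive integer (indeed, a nonnegative power of $3$, as noted at the start of Section~\ref{sec:utRed}). Hence the no-reduction case must occur, producing $(C_{n+1}(3i), C_{n+1}(3i+1), C_{n+1}(3i+2)) = (1, 3, 1)$.

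If one prefers a self-contained derivation, the same conclusion follows from Lemma~\ref{thm:red-cd}: letting $a/b$ and $c/d$ be the consecutive row-$n$ fractions with $\mathcal{C}(a/b, c/d) = bc - ad = 1$, the reduction factor of either mediant must divide $1$, so the mediants $(2a+c)/(2b+d)$ and $(a+2c)/(b+2d)$ are already in lowest terms. A one-line cross-difference calculation then yields the three consecutive values
\[
\mathcal{C}\!\left(\tfrac{a}{b}, \tfrac{2a+c}{2b+d}\right) = bc - ad = 1, \quad
\mathcal{C}\!\left(\tfrac{2a+c}{2b+d}, \tfrac{a+2c}{b+2d}\right) = 3(bc - ad) = 3, \quad
\mathcal{C}\!\left(\tfrac{a+2c}{b+2d}, \tfrac{c}{d}\right) = bc - ad = 1.
\]

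There is no real obstacle: the lemma is essentially a corollary of Lemma~\ref{localpropagation} combined with the integrality of cross-differences. The only thing worth writing down explicitly is the observation that $V = 1$ excludes the reduction branch of Lemma~\ref{localpropagation}, after which the conclusion is immediate.
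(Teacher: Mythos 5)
Your proposal is correct and matches the paper's proof, which likewise observes that a cross-difference of $1$ makes reduction of the mediants impossible (via Lemma~\ref{thm:red-cd}) and then invokes the no-reduction branch of Lemma~\ref{localpropagation}. Your alternative integrality argument is a harmless variant of the same idea.
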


\begin{proof}
If $C_n(i)=1$, the mediants clearly cannot be reduced.
\end{proof}

For other cases we need to look in some surrounding neighborhood.

\begin{lemma}\label{fractal2}
If $C_n(i)$ is a strict local maximum, then $$(C_{n + 1}(3i), C_{n + 1}(3i + 1), C_{n + 1}(3i + 2)) = (C_n(i), 3C_n(i), C_n(i)).$$
\end{lemma}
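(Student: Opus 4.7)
The plan is to reduce the statement to a mod-$3$ congruence question and then analyze the three possible positions of $i$ within the block of row $n$ that descends from one pair of row-$(n-1)$ fractions. By Lemma~\ref{localpropagation}, the triple in row $n+1$ is either $(V,3V,V)$ or $(V/3,V/3,V/3)$, so it suffices to rule out reduction when the mediants of $F_i$ and $F_{i+1}$ are formed, where $F_i,F_{i+1}\in SB_n$ are the two fractions of cross-difference $V$.

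First I would record the mod-$3$ criterion for reduction: for adjacent fractions $a/b$ and $c/d$, the mediants $(2a+c)/(2b+d)$ and $(a+2c)/(b+2d)$ reduce (necessarily by exactly $3$, by Theorem~\ref{thm:red1and3}) iff $a\equiv c\pmod 3$ and $b\equiv d\pmod 3$; indeed $3\mid 2a+c$ iff $a\equiv c\pmod 3$, and similarly for $2b+d$. The lemma then becomes the statement: if $V$ is a strict local maximum, then $F_i$ and $F_{i+1}$ fail to be simultaneously congruent mod $3$ in numerator and denominator.

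Next I would exploit the block structure of $C_n$ relative to $C_{n-1}$. Let $G_0,G_1,\ldots$ be the fractions of $SB_{n-1}$, let $V'_j=C_{n-1}(j)$, and let $m_j\in\{1,3\}$ be the reduction factor applied at the pair $(G_j,G_{j+1})$. A short direct computation of $C(G_j,M_j^L)$, $C(M_j^L,M_j^R)$, and $C(M_j^R,G_{j+1})$ shows that the three cross-differences of $C_n$ at positions $3j,\,3j+1,\,3j+2$ equal $(V'_j/m_j,\ 3V'_j/m_j^2,\ V'_j/m_j)$. Writing $i=3j+\epsilon$ with $\epsilon\in\{0,1,2\}$: the cases $\epsilon=0$ and $\epsilon=2$ both give $V=V'_j/m_j$ with an in-block neighbor equal to $3V'_j/m_j^2\ge V$, contradicting strict maximality; so $\epsilon=1$, giving $V=3V'_j/m_j^2$ with both in-block neighbors equal to $V'_j/m_j$. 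Strictness now forces $3/m_j>1$, i.e.\ $m_j=1$.

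Finally, $m_j=1$ means no reduction occurred when $G_j$ and $G_{j+1}$ produced their mediants, so by the mod-$3$ criterion $G_j\not\equiv G_{j+1}\pmod 3$. Writing $G_j=p/q$ and $G_{j+1}=u/v$, we have $F_i=(2p+u)/(2q+v)$ and $F_{i+1}=(p+2u)/(q+2v)$, and modulo $3$ the differences of numerators and of denominators are $u-p$ and $v-q$, at least one of which is nonzero. Hence $F_i\not\equiv F_{i+1}\pmod 3$, so their mediants do not reduce in row $n+1$, and the triple is $(V,3V,V)$ as claimed. The main obstacle I anticipate is the bookkeeping in the block-triple step: one must be precise about which of the three positions within its block $i$ occupies and rule out each boundary case $\epsilon\in\{0,2\}$ using strict maximality. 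Once that is handled, the passage from $m_j=1$ in the parents to mod-$3$ non-congruence of the children is immediate.
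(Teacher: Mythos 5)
Your proof is correct and follows essentially the same route as the paper's: identify that a strict local maximum must be the middle cross-difference of an unreduced mediant pair, then use the parents' mod-$3$ incongruence to show the grandchildren's mediants cannot reduce. Your explicit block computation $(V'_j/m_j,\, 3V'_j/m_j^2,\, V'_j/m_j)$ with the case analysis on $\epsilon$ is a more careful version of the paper's terse localization argument, and your difference computation $u-p$, $v-q$ replaces the paper's equivalent $(5p+4r)/(5q+4s)$ calculation.
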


\begin{proof}
A local maximum can only occur as the cross-difference between a left and  right mediant. Indeed, when a fraction is copied to the next row its cross-differences are either equal to the neighboring cross-differences if there was a reduction, or smaller if there was no reduction.

Hence the fractions $a/b$ and $c/d$ corresponding to $C_n(i)$ are the left and right mediants of the same parents, and they must not have been reduced. Suppose the parents of $a/b$ and $c/d$ are $p/q$ and $r/s$. We must have either $p \not \equiv r \pmod{3}$ or $q \not \equiv s \pmod{3}$ since $a/b$ and $c/d$ were not reduced, and so we can compute the mediants of $a/b$ and $c/d$ exactly as $(5p + 4r)/(5q + 4s)$ and $(4p + 5r)/(4q + 5s)$. Since $p \equiv r \pmod{3}$ and $q \equiv s \pmod{3}$ do not both hold, these mediants are irreducible as claimed.
\end{proof}

It turns out these are the only cases where the first rule applies.

\begin{lemma}\label{fractal3}
If $C_n(i) \neq 1$ nor $C_n(i)$ is a strict local maximum, then $$(C_{n + 1}(3i), C_{n + 1}(3i + 1), C_{n + 1}(3i + 2)) = (C_n(i)/3, C_n(i)/3, C_n(i)/3).$$
\end{lemma}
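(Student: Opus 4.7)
My plan is to use Lemma~\ref{localpropagation} to reduce the claim to showing that the mediants of the two consecutive fractions $a/b, c/d$ with cross-difference $V := C_n(i)$ reduce. By Theorem~\ref{thm:red1and3} this is equivalent to the congruences $a \equiv c \pmod{3}$ and $b \equiv d \pmod{3}$. The strategy is to trace $a/b$ and $c/d$ back to their parents $p/q, r/s$ in row $n-1$, whose cross-difference I denote by $W$, and do a case analysis on which of the three configurations $(p/q, L)$, $(L, R)$, $(R, r/s)$ the pair $(a/b, c/d)$ sits in (here $L$ and $R$ are the possibly-reduced left and right mediants of $p/q$ and $r/s$).

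The first key step is to observe that a strict local maximum occurs exactly at the middle entry of a triple of the form $(W, 3W, W)$ produced by Lemma~\ref{localpropagation} with no parent reduction; triples of the form $(W/3, W/3, W/3)$ contain no strict local maxima, and the outer entries of $(W, 3W, W)$ have a neighbor equal to $3W$. Therefore the hypotheses $V \neq 1$ and ``$V$ is not a strict local maximum'' leave exactly two situations: (A) the configuration is $(p/q, L)$ or $(R, r/s)$ with $L, R$ unreduced, so that $V = W$ and $3 \mid W$; or (B) the parents' mediants were reduced, so that $V = W/3$ and $9 \mid W$.

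For case (A), I would view $(p,q)$ and $(r,s)$ as nonzero vectors in $\mathbb{F}_3^2$ (they are nonzero mod $3$ because the parents are in lowest terms). The condition $3 \mid qr - ps$ forces $(r, s) \equiv \lambda(p, q) \pmod{3}$ for some $\lambda \in \{1, 2\}$; the value $\lambda = 1$ would mean $L, R$ reduce, contradicting the hypothesis of case (A), so $\lambda \equiv -1 \pmod{3}$ and hence $p + r \equiv q + s \equiv 0 \pmod{3}$. Substituting $(c,d) = (2p+r, 2q+s)$ in the $(p/q, L)$ subcase, and $(a,b) = (p+2r, q+2s)$ in the symmetric $(R, r/s)$ subcase, immediately yields $a \equiv c$ and $b \equiv d \pmod{3}$.

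For case (B), parent reduction gives $p \equiv r$ and $q \equiv s \pmod{3}$, and $9 \mid W$ lets me apply Lemma~\ref{thm:mod9} to the parents, leaving two possibilities: $(p, q) \equiv (r, s) \pmod{9}$ or $(p, q) \equiv -(r, s) \pmod{9}$. The complementary alternative combined with $p \equiv r \pmod{3}$ gives $2p \equiv 0 \pmod{3}$ and similarly $2q \equiv 0 \pmod{3}$, hence $3 \mid \gcd(p,q)$, contradicting $\gcd(p,q) = 1$. So $p \equiv r, q \equiv s \pmod{9}$. Writing $p = \alpha + 3p'$, $r = \alpha + 3r'$, $q = \beta + 3q'$, $s = \beta + 3s'$ with $p' \equiv r'$ and $q' \equiv s' \pmod{3}$, a direct substitution in each of the three configurations shows $a - c \equiv p' - r' \pmod{3}$ and $b - d \equiv q' - s' \pmod{3}$, both of which vanish; the new mediants therefore reduce. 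The main obstacle is the bookkeeping in case (B)---keeping track of three configurations through a parent-level reduction by $3$---but this collapses once one notices that after ruling out the complementary alternative of Lemma~\ref{thm:mod9}, a single mod-$3$ identity in $p', q', r', s'$ handles all three configurations at once.
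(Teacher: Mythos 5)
Your proof is correct, and while it rests on the same structural inputs as the paper's (trace the pair back to its parents, exploit Lemma~\ref{thm:mod9}/Corollary~\ref{thm:redcd9}, and reduce everything to the criterion $a\equiv c$, $b\equiv d \pmod 3$), your case decomposition and the key computations are genuinely different. The paper splits into ``one of $a/b$, $c/d$ was copied from the previous row'' versus ``both are reduced mediants of the same parents,'' and then subdivides the first case by whether the mediant was reduced; your split by parent-reduction status times position in the triple is an equivalent partition, but it lets you merge the paper's reduced sub-case of Case~1 with its Case~2 into your single case~(B), which you then dispatch uniformly with the substitution $p=\alpha+3p'$, etc.---one identity $a-c\equiv p'-r'\pmod 3$ covers all three configurations, where the paper runs two separate computations. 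In your case~(A), the paper instead does explicit residue casework on $a,b,z,w\pmod 3$ with a normalization step; your observation that $3\mid qr-ps$ with $(p,q)$ and $(r,s)$ nonzero in $\mathbb{F}_3^2$ forces $(r,s)\equiv\lambda(p,q)$, and that irreducibility of the parents' mediants pins $\lambda\equiv -1$, is shorter and less error-prone. Your preliminary classification of strict local maxima (only the middle entry of an unreduced triple qualifies) is the same observation the paper makes inside the proof of Lemma~\ref{fractal2}, and your ruling out of the complementary alternative in case~(B) via $3\mid\gcd(p,q)$ is a clean substitute for the paper's direct appeal to Corollary~\ref{thm:redcd9}. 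Both approaches buy the same theorem; yours is tighter to verify, the paper's is more self-contained at the level of elementary residue checking.
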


\begin{proof}
Suppose $C_n(i)$ corresponds to the consecutive fractions $a/b$ and $c/d$. The fact that $C_n(i)$ is not a local maximum means that either one of the corresponding fractions was copied from the previous row, or that both of them are mediants that were reduced.

In the \textbf{first case}, we can assume without loss of generality that $a/b$ was copied from the previous row. Denote by $z/w$ the other parent of $c/d$. Either $c/d$ was not reduced, so that $c = (2a + z)$ and $d = (2b + w)$, or else $c/d$ was reduced, in which case $c = (2a + z)/3$ and $d = (2b + w)/3$. We consider these cases separately. 

First suppose $c/d$ is not reduced. Then either $a \not \equiv z$ or $b \not \equiv w \pmod{3}$, while the theorem statement requires $3 | bc - ad$ whence $3 | bz - aw$. We can now do casework on $a, b, z, w$ modulo $3$. If neither $a$ nor $b$ is divisible by $3$, then neither $z$ nor $w$ is divisible by 3. The divisibility properties of the mediants will not change if we multiply both $a$ and $z$ by a number not divisible by 3; the same is true for $b$ and $z$, so without loss of generality we may assume that $a, b \equiv 1 \pmod{3}$. Now if $z \equiv 1 \pmod{3}$, then $3 | bz - aw$ forces $w \equiv 1 \pmod{3}$ which contradicts the fact that $c/d$ does not reduce. Otherwise, if $z \equiv 2 \pmod{3}$ then $3 | bz - aw$ forces $w \equiv 2 \pmod{3}$ and the mediants of $a/b$ and $c/d$ reduce, as claimed. This leaves only the case when exactly one of $a, b$ is divisible by $3$; without loss of generality we assume it is $a$. Then $3 | bz - aw$ forces $3 | z$, and we can assume (as before) without loss of generality that $b \equiv 1 \pmod{3}$. If $w \equiv 1 \pmod{3}$, we contradict the fact that $c/d$ does not reduce. If instead $w \equiv 2 \pmod{3}$, then the mediants of $a/b$ and $c/d$ reduce as claimed.

Suppose instead that $c/d$ is reduced. The theorem statement requires $3 | bc - ad$, so since $c/d$ is reduced we must have $9 | bz - aw$. It now follows from Corollary~\ref{thm:redcd9} that $a \equiv z \bmod{9}$ and $b \equiv w \bmod{9}$. It's now easy to check that the mediants of $a/b$ and $c/d$ reduce: we have $c = (2a + z)/3$ and $d = (2b + w)/3$, so $$2a + c = 2a + (2a + z)/3 \equiv 3a \equiv 0 \bmod{3}.$$ Similarly, $$2b + d = 2b + (2b + w)/3 \equiv 3b \equiv 0 \bmod{9}.$$ Thus the claim holds in the \textbf{first case}.

Now consider the \textbf{second case}, in which $a/b$ and $c/d$ are the reduced left and right mediant of the same parents. That means the cross-difference of their parents $p/q$ and $r/s$ is divisible by 9. We can now apply Corollary~\ref{thm:redcd9} to conclude that $c/d = a/b$ modulo 9, whence their mediants reduce as well. 
\end{proof}

Combining all the cases we get our main theorem on how the cross-differences propagate.

\begin{theorem}\label{fractal}
If $C_n(i) = 1$ or $C_n(i)$ is a strict local maxima, then $$(C_{n + 1}(3i), C_{n + 1}(3i + 1), C_{n + 1}(3i + 2)) = (C_n(i), 3C_n(i), C_n(i)).$$ Otherwise, $$(C_{n + 1}(3i), C_{n + 1}(3i + 1), C_{n + 1}(3i + 2)) = (C_n(i)/3, C_n(i)/3, C_n(i)/3).$$
\end{theorem}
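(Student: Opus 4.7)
The plan is to observe that Theorem~\ref{fractal} is essentially a repackaging of Lemmas~\ref{fractal1}, \ref{fractal2}, and \ref{fractal3} together with Lemma~\ref{localpropagation}, so the real work has already been done. First I would invoke Lemma~\ref{localpropagation} to reduce the problem to a binary dichotomy: for any index $i$, the triple $(C_{n+1}(3i), C_{n+1}(3i+1), C_{n+1}(3i+2))$ must equal either $(V, 3V, V)$ or $(V/3, V/3, V/3)$ where $V = C_n(i)$. Thus it suffices to determine, as a function of the local behavior of $C_n$ near $i$, which of the two shapes occurs.

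Next I would verify that the three cases --- (a) $C_n(i) = 1$, (b) $C_n(i) > 1$ and a strict local maximum, and (c) $C_n(i) > 1$ and not a strict local maximum --- exhaust all possibilities, which is immediate from trichotomy. Then Lemma~\ref{fractal1} disposes of case (a) with outcome $(1,3,1)$, Lemma~\ref{fractal2} disposes of case (b) with outcome $(V,3V,V)$, and Lemma~\ref{fractal3} disposes of case (c) with outcome $(V/3,V/3,V/3)$. Since in case (a) $V = 1$ and so $(V, 3V, V) = (1, 3, 1)$, cases (a) and (b) are consistent with the first branch of the theorem, while case (c) matches the second branch.

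The main obstacle --- really the only substantive work --- lies in the preceding lemmas rather than in this combined theorem. Lemma~\ref{fractal3} in particular required a delicate case analysis modulo $3$ and $9$, leaning on Corollary~\ref{thm:redcd9} to handle the subcase where one parent-generation fraction was already reduced. Once those modular analyses are accepted, Theorem~\ref{fractal} is a purely combinatorial bookkeeping step, and the proof reduces to citing the three lemmas and remarking that their hypotheses partition all positions $i$.
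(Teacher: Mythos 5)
Your proposal is correct and matches the paper exactly: the paper states Theorem~\ref{fractal} with no separate proof beyond the remark that it combines Lemmas~\ref{fractal1}, \ref{fractal2}, and \ref{fractal3} (with Lemma~\ref{localpropagation} supplying the dichotomy), which is precisely the bookkeeping you describe. Your additional observation that the three cases partition all indices and that case (a) is consistent with the first branch is a slightly more careful write-up of the same argument.
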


\section{Cross-Differences Continued}\label{sec:CDsCont}

Here we want to look at different cool properties of the rows of the cross-differences.

\begin{lemma}
Suppose $C_n(i) = 1$. Then the values in row $n+m$ with the indices between $3^mi$ and $3^m(i+1)-1$ inclusive are the copy of $C_m$. In other words, $C_{n+m}(3^m i +k) = C_m(k)$, for $ 0 \leq k < 3^m$.
\end{lemma}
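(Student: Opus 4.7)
The plan is to prove the claim by induction on $m$. The base case $m = 0$ reduces to the hypothesis $C_n(i) = C_0(0) = 1$.

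For the inductive step, suppose $C_{n+m}(3^m i + k) = C_m(k)$ for every $0 \leq k < 3^m$. To deduce the analogous equality for row $n+m+1$, I apply Theorem~\ref{fractal} position by position: for each $0 \leq j < 3^m$, the triple $\bigl(C_{n+m+1}(3^{m+1}i + 3j + \ell)\bigr)_{\ell = 0,1,2}$ is determined by the value $C_{n+m}(3^m i + j)$ together with whether that value is a strict local maximum in row $n+m$, and likewise $\bigl(C_{m+1}(3j + \ell)\bigr)_{\ell = 0,1,2}$ is determined by $C_m(j)$ together with its strict-local-maximum status in $C_m$. The inductive hypothesis already matches the values, so all that remains is to match the strict-local-maximum tests.

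For interior indices $0 < j < 3^m - 1$, both neighbors of position $3^m i + j$ in row $n+m$ lie inside the block and agree with $C_m(j \pm 1)$ by the inductive hypothesis, so the two tests produce the same answer and the propagation rule yields identical triples on the two sides. The delicate case is the boundary $j \in \{0, 3^m - 1\}$, where a neighbor in row $n+m$ may lie outside the block and is therefore not controlled by the inductive hypothesis. The saving observation is that $C_m(0) = C_m(3^m - 1) = 1$ for every $m$, which follows by a trivial separate induction: $C_0(0) = 1$, and a value of $1$ at the left (respectively right) end of a row propagates under Theorem~\ref{fractal} to a value of $1$ at the left (respectively right) end of the next row. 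Because Theorem~\ref{fractal} sends any value equal to $1$ to $(1, 3, 1)$ independently of its neighbors, the boundary triples agree as well, and the induction closes.

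The main obstacle is exactly this boundary concern: the propagation rule is not a pure function of the value at a single index, and a naive self-similarity claim can fail whenever a block's behavior depends on context outside the block. The resolution is the happy coincidence that the boundary values $C_m(0)$ and $C_m(3^m - 1)$ always equal $1$, which is precisely the value at which the strict-local-maximum branch of Theorem~\ref{fractal} is short-circuited. Once this is noted, the rest of the argument is mechanical.
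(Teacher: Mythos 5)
Your proof is correct and takes essentially the same approach as the paper: both rest on the observation that a block whose endpoints equal $1$ propagates independently of its surroundings, since the propagation rule for the value $1$ does not consult neighbors. The paper compresses this into one sentence; you have simply made the induction on $m$ and the boundary analysis explicit.
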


\begin{proof}
If some range in the row of cross-differences starts and ends with 1, the result of repeated  propagation does not depend on the neighbors. Thus if we start with just $1$, the result of $m$ consecutive propagation is $C_m$. 
\end{proof}

We promised this statement before: the proof is immediate now.

\begin{corollary}
The first and the last thirds of $C_n$ are copies of $C_{n-1}$.
\end{corollary}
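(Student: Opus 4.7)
The plan is to deduce the corollary as a direct application of the preceding lemma, exploiting the fact that $C_1 = \{1, 3, 1\}$ has value $1$ at both endpoints.

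First I would recall that $C_n$ contains $3^n$ entries, so its first third consists of the indices $0, 1, \ldots, 3^{n-1}-1$, and its last third consists of the indices $2 \cdot 3^{n-1}, \ldots, 3^n - 1$. I would then invoke the preceding lemma with the role of ``$n$'' played by $1$, the role of ``$m$'' played by $n-1$, and with $i=0$: since $C_1(0) = 1$, the lemma yields $C_n(k) = C_{n-1}(k)$ for $0 \le k < 3^{n-1}$, which is precisely the statement that the first third of $C_n$ is a copy of $C_{n-1}$.

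Next I would apply the same lemma with $i = 2$, using the fact that $C_1(2) = 1$. This gives $C_n(2 \cdot 3^{n-1} + k) = C_{n-1}(k)$ for $0 \le k < 3^{n-1}$, which is the statement about the last third. Together, these two applications establish the corollary.

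There is essentially no obstacle here: the content of the claim is already packaged in the preceding lemma, and the only work is to pick the two instances $i=0$ and $i=2$ in the first nontrivial row $C_1$. The base cases $n=1$ (where the first and last thirds each consist of a single $1$) and $n=0$ (vacuous) are handled by the same formal substitution.
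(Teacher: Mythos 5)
Your proof is correct and follows the same route as the paper: the paper likewise obtains the corollary by propagating the first and last $1$ of $C_1$ forward $n-1$ rows via the preceding lemma. Your version merely makes the substitutions ($i=0$ and $i=2$ in row $C_1$, with $m=n-1$) explicit.
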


\begin{proof}
The first/last third of $C_n$ are the result of the propagation of the first/last 1 in $C_1$.
\end{proof}

This fact is similar to Property~2 for the sequences without reduction, in the sense that it allows us to view all the rows of cross-differences as a single infinite sequence.

What happens to the middle third? Notice that $C_3$ has three ones in a row in a place that we can call the first third of the middle third. It follows that $C_{n+3}$ will have three copies of $C_n$ in the first third of the middle third.

In general, we see copies of previous rows on the outskirts of a given row. The new things happen in the very middle of a row. In order to better describe this middle behavior we introduce the notion of a steeple.

\subsection{Steeples}

The \textit{steeple} is defined as the largest range of values in a row of cross-differences containing the middle and not containing ones.

In Figure~\ref{fig:steeples} we show the logarithms of the steeples in the first eight rows in sequence, separated by zeroes. From the picture we can see that the even-indexed logarithmic steeples can be obtained from the previous steeple via a shift up by 1. The odd-indexed logarithmic steeples have a middle third that is the same as the previous steeple shifted up by $1$, while the first and last third consist of ones.

\begin{figure}[htbp]
\centering
\includegraphics[scale=0.5]{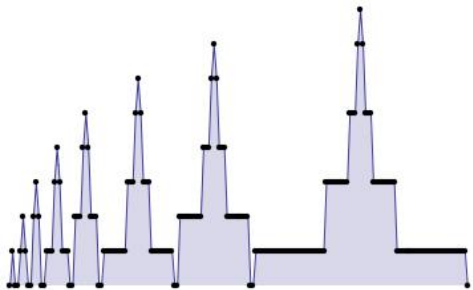}
\caption{Steeples}\label{fig:steeples}
\end{figure}

The following lemma gives a full description of steeples. First, we define $m(i)$ as the index of the first zero or two in the ternary representation of $i$ padded to have $n$ digits. We begin our indexing at $1$, and call this number the \textit{middleness} of the index $i$. For the middle index $i = (3^n-1)/2$, the middleness is not defined. Otherwise, the smaller $m(i)$ is, the further away from the middle the index $i$ is.

\begin{lemma}\label{thm:mainsteeple} If   $m = m(i) > \lceil n/2 \rceil$, then $i$ corresponds to a steeple. Moreover,  $C_n(i) = 3^{2m-n-1}$. If $i$ is the middle point, then $C_n(i) = 3^{n}$.
\end{lemma}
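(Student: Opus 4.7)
The plan is to argue by induction on $n$, using Theorem~\ref{fractal} to push values from row $n$ to row $n+1$. The base cases $n \leq 2$ follow by inspection of $C_0$, $C_1$, $C_2$. The key dictionary for the inductive step is that writing an index $i' = 3i + j$ of $C_{n+1}$ with $j \in \{0,1,2\}$ corresponds to appending the digit $j$ to the $n$-digit ternary expansion of $i$. Consequently, if $i$ is non-middle in $C_n$, the first non-$1$ digit of $i'$ still sits among the first $n$ positions and $m(i') = m(i)$; whereas if $i = 1^n$ is the middle of $C_n$, then $3i+1 = 1^{n+1}$ is the middle of $C_{n+1}$, while $3i$ and $3i+2$ have $m = n+1$.

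I would split the inductive step into two cases. \textbf{Case A.} If $i$ is the middle of $C_n$, then $C_n(i) = 3^n$ by induction, and its two immediate neighbors $1^{n-1}0$ and $1^{n-1}2$ have middleness $n$, hence value $3^{n-1}$ by induction. Thus the middle is a strict local maximum, and Theorem~\ref{fractal} replaces it by the triple $(3^n, 3^{n+1}, 3^n)$: the $3^{n+1}$ sits at the middle of $C_{n+1}$, and the two outer entries, having $m = n+1$, match the formula $3^{2(n+1) - (n+1) - 1} = 3^n$.

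\textbf{Case B.} If $i$ is non-middle with $m = m(i) = m(i') > \lceil (n+1)/2 \rceil \geq \lceil n/2 \rceil$, induction gives $C_n(i) = 3^{2m-n-1} \neq 1$. The crux is to check that $C_n(i)$ is not a strict local maximum, so that the divide-by-$3$ branch of Theorem~\ref{fractal} applies. The direction from $i$ toward the middle is dictated by the $m$-th ternary digit of $i$: if it is $0$, the middle lies to the right of $i$, and if $2$, to the left. Assuming by symmetry that this digit is $0$ and writing $i = 1^{m-1} 0 r$, a small case analysis on whether the tail $r$ equals $2^{n-m}$ and whether $m = n$ shows that $m(i+1) \geq m$, and by induction $C_n(i+1) \geq C_n(i)$. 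Theorem~\ref{fractal} then yields three copies of $C_n(i)/3 = 3^{2m - (n+1) - 1}$, matching the formula at each of $3i, 3i+1, 3i+2$. Finally, for the steeple statement: the indices with $m(i) > \lceil n/2 \rceil$ together with the middle form exactly the set of $n$-digit ternary strings whose first $\lceil n/2 \rceil$ digits are $1$, a contiguous interval around the middle on which $C_n \neq 1$ by the formula, hence contained in the steeple.

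The main obstacle I expect is the carry-tracking in Case~B: when $r = 2^{n-m}$, adding one to $i$ propagates carries across the block of $2$s and bumps $m$ up by one, and in the boundary subcase $m = n$ the neighbor actually equals the middle of $C_n$. These carry subcases, together with the symmetric $d_m = 2$ situation, are the only place anything non-formal happens; once the local non-strict-maximum check is in place, the rest is direct bookkeeping against Theorem~\ref{fractal}.
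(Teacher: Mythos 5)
Your proposal is correct and follows essentially the same route as the paper's proof: induction on the row using the propagation rule of Theorem~\ref{fractal}, with the middle index treated as the unique strict local maximum (yielding the $(V,3V,V)$ branch) and all other steeple indices falling into the divide-by-three branch, their middleness being preserved under the append-a-digit correspondence $i \mapsto 3i+j$. Your Case~B, which explicitly verifies via the neighbor toward the middle that a non-middle steeple index is not a strict local maximum, supplies a detail the paper only asserts (``the only peak in the steeple is the middle point''), so no changes are needed.
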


\begin{proof}
 We proceed by induction. The base case is clear, so let us suppose that the lemma holds for the $n$-th steeple.

Notice that the only peak in the steeple is the middle point. By the propagation rule, the middle point in row $n$ becomes three points in row $n+1$ with values $(3^n, 3^{n+1}, 3^n)$ and indices $(3^{n+1}-3)/2,(3^{n+1}-1)/2,(3^{n+1}+1)/2$. The middleness of the indices $(3^{n+1}-3)/2$ and $(3^{n+1}+1)/2$ is $n+1$. This matches the formula.

On the other hand, consider index $i$ in row $n$ that does not correspond to the middle. It propagates to three points with indices $3i$, $3i+1$, and $3i+2$ in row $n+1$. That is, the new three indices have the same first $n$ digits in their ternary representation as $i$. Therefore they have the same middleness. According to the  propagation rule, the new cross-differences are decreased by a factor of $3$; this agrees with the formula, since the value of $m$ does not change while the value of $n$ increases by 1.
\end{proof}

The description of steeples that we inferred from Figure~\ref{fig:steeples} now follows as a corollary. 

\begin{corollary}
The steeple in an odd row is equal to the previous steeple times 3. The middle third of the steeple in an even row is equal to the previous steeple times 3, while the first and last third are all threes.
\end{corollary}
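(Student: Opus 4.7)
The plan is to deduce the corollary directly from the explicit formula in Lemma~\ref{thm:mainsteeple}. The steeple of row $n$ consists of the middle point together with the indices $i$ satisfying $m(i) > \lceil n/2 \rceil$; in ternary (padded to $n$ digits), these are precisely the indices whose first $\lceil n/2 \rceil$ digits are all $1$s. So I would first translate the statement into a clean statement about ternary strings.

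For the odd case $n = 2k+1$, steeple indices have the form $i = 1^{k+1} d_{k+2}\cdots d_{n}$, and for $n-1 = 2k$ they have the form $i' = 1^{k}d'_{k+1}\cdots d'_{n-1}$. I would define the natural bijection $i \leftrightarrow i'$ by dropping a leading $1$: $i' = 1^{k}d_{k+2}\cdots d_{n}$. This sends the middle to the middle and decreases middleness by exactly $1$, since the first non-$1$ digit shifts one position to the left. Using Lemma~\ref{thm:mainsteeple}, a non-middle index has value $3^{2m(i)-n-1} = 3^{2m(i)-2k-2}$ in row $n$, while its partner has value $3^{2m(i')-(n-1)-1} = 3^{2m(i)-2k-3}$ in row $n-1$, a ratio of exactly $3$. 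The middle itself jumps from $3^{n-1}$ to $3^{n}$, also a factor of $3$. This proves the odd-row half.

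For the even case $n = 2k$, a steeple index has the form $i = 1^{k}d_{k+1}\cdots d_{n}$. I would partition these indices according to the value of $d_{k+1}$: the first third corresponds to $d_{k+1}=0$, the middle third to $d_{k+1}=1$, and the last third to $d_{k+1}=2$. For the outer thirds, $m(i) = k+1$, so Lemma~\ref{thm:mainsteeple} gives $C_n(i) = 3^{2(k+1)-2k-1} = 3$, proving that these thirds are all $3$s. For the middle third, the indices have the form $1^{k+1}d_{k+2}\cdots d_{n}$, and I would use the same ``drop a leading $1$'' bijection onto the steeple of row $n-1 = 2k-1$, again checking (using middleness decreases by $1$ and plugging into the formula) that the values in the middle third are exactly three times the corresponding values in the row $n-1$ steeple, including the middle points.

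There is no real obstacle here: once Lemma~\ref{thm:mainsteeple} is in hand, the argument is bookkeeping on ternary digits and a single algebraic identity $3^{2m - n - 1} = 3 \cdot 3^{2(m-1) - (n-1) - 1}$. The only subtlety to watch is the boundary between ``$i$ is the middle'' and ``$i$ is a generic steeple index,'' since Lemma~\ref{thm:mainsteeple} uses a different formula at the center; the bijection handles this uniformly because the middle of row $n$ maps to the middle of row $n-1$ under dropping a leading $1$.
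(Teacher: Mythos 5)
Your proof is correct and takes essentially the same route as the paper, which simply asserts that the corollary follows from Lemma~\ref{thm:mainsteeple}; your digit-dropping bijection and the identity $3^{2m-n-1} = 3\cdot 3^{2(m-1)-(n-1)-1}$ are exactly the bookkeeping the paper leaves implicit.
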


Each steeple is surrounded by ones, and every row consists of steeples of various heights together with ones; ones become new steeples in future rows. The next corollary describes how a new set of ones is generated from a steeple in an even row.

\begin{corollary}
In an odd row $n=2k-1$, if the middleness of  $i$ is $k+1$, then $C_n(i) = 1.$
\end{corollary}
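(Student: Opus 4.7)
The plan is to descend from the preceding row $n - 1$, which is even, and to apply Theorem~\ref{fractal}. For any index $i$ in row $n$ whose middleness equals the stated value, let $i' = \lfloor i/3 \rfloor$ be its parent. Deleting the last ternary digit shows that $i'$ has the same middleness in row $n - 1$ (well-defined because $i$ is not the middle of its row). Since $n - 1$ is even, the threshold $\lceil (n-1)/2 \rceil$ in Lemma~\ref{thm:mainsteeple} is strictly smaller than this middleness, so the lemma applies and the exponent formula pins down $C_{n-1}(i') = 3$.

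Given this, Theorem~\ref{fractal} reduces the corollary to showing that $i'$ is not a strict local maximum in row $n - 1$, since then $C_n(3i' + t) = C_{n-1}(i')/3 = 1$ for every $t \in \{0, 1, 2\}$. To argue non-strictness I would exhibit a neighbor of $i'$ with at least as large a cross-difference. Writing the ternary representation of $i'$ as $1 \cdots 1 \, d \, d_{k+2} \cdots$ with $d \in \{0, 2\}$, a ternary increment or decrement that does not ripple past position $k + 1$ lands on another index of the same middleness, which by Lemma~\ref{thm:mainsteeple} has the same value $3$. In the boundary case, where the carry reaches position $k + 1$, the neighbor's first $k + 1$ digits become all ones, its middleness jumps, and the lemma gives it a strictly larger value.

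The main obstacle I anticipate is the ternary-carry bookkeeping needed to produce such a neighbor cleanly, together with handling the small-$k$ edge case where the plateau of equal threes degenerates to a single index; there I would instead appeal directly to the strictly larger value of the adjacent, more-central steeple entry, which Lemma~\ref{thm:mainsteeple} again supplies. Once the non-strict-local-maximum condition is verified, the conclusion $C_n(i) = 1$ is immediate from Theorem~\ref{fractal}, because every $i$ with the hypothesized middleness in row $n$ arises as $3i' + t$ for some such parent $i'$ and some $t \in \{0, 1, 2\}$.
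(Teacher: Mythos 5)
Your overall strategy coincides with the paper's one\nobreakdash-line proof: the entries in question are the propagation, via Theorem~\ref{fractal}, of the threes in the outer thirds of the steeple of the preceding even row, and those threes are not strict local maxima. But there is an arithmetic problem at your first step, and it exposes the fact that the statement as printed is off by one. If $i$ has middleness $k+1$ in row $n = 2k-1$, then its parent $i'$ has middleness $k+1$ in row $n-1 = 2k-2$, and Lemma~\ref{thm:mainsteeple} gives $C_{n-1}(i') = 3^{2(k+1)-(2k-2)-1} = 3^3 = 27$, not $3$ as you assert; your argument would then conclude $C_n(i) = 27/3 = 9$, not $1$. A direct check confirms this: in $C_3$ (so $k = 2$) the indices of middleness $3$ are $12 = 110_3$ and $14 = 112_3$, whose cross-differences are $9$, while the ones flanking the steeple sit at indices $9$ through $11$ and $15$ through $17$ and have middleness $2 = k$. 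So the corollary should read ``middleness $k$''; with that correction your computation $C_{n-1}(i') = 3$ is right, and the rest of your argument --- the parent is one of the threes in the first or last third of the even-row steeple, hence has a neighbor of value at least $3$ and is not a strict local maximum, hence propagates to $(1,1,1)$ --- goes through and matches the paper's intent.

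One smaller point: your parenthetical claim that the middleness of $i'$ is well-defined ``because $i$ is not the middle of its row'' is not quite right. If the first non-one ternary digit of $i$ is its \emph{last} digit, then $i'$ is all ones, i.e.\ the middle of row $n-1$, even though $i$ is not the middle of row $n$. Under the hypothesis as literally stated (middleness $k+1$ in row $2k-1$) this happens exactly when $k = 2$; under the corrected hypothesis (middleness $k$) it occurs only in the degenerate row $n = 1$. Either way the edge case deserves a word before you pass to the parent row.
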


\begin{proof}
These values are propagated from the set of threes in the previous steeple.
\end{proof}

\subsection{Recursive description}

We are now ready to describe the recursive construction of rows.

\begin{theorem}\label{thm:recursive}
Consider row $C_n$ and $m \leq \lceil n/2 \rceil$. The part of the row that has middleness $m$ consists of $3^{m-1}
$ copies of the row $C_{n+1-2m}$.
\end{theorem}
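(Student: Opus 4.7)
The plan is to reduce the theorem to the unlabeled propagation-of-ones lemma stated at the start of Section~\ref{sec:CDsCont}. The key intermediate claim I will establish is that, in row $C_{2m-1}$, each of the two middleness-$m$ clusters consists of exactly $3^{m-1}$ consecutive entries, all equal to $1$. Once this is in hand, the theorem follows immediately. Applying the propagation-of-ones lemma with its ``$n$'' replaced by $2m-1$ and its step count by $n+1-2m$, each of the $3^{m-1}$ ones in row $C_{2m-1}$ grows, after $n+1-2m$ further steps, into an independent copy of $C_{n+1-2m}$ occupying a block of $3^{n+1-2m}$ consecutive indices in row $C_n$. A direct ternary computation shows that the union of these $3^{m-1}$ blocks is precisely one middleness-$m$ cluster of row $C_n$: an index $i$ in row $C_{2m-1}$ having middleness $m$ with $d_m=0$ gives rise to the block of indices $j$ in row $C_n$ satisfying $\lfloor j/3^{n+1-2m}\rfloor=i$, and these are exactly the indices of row $C_n$ with middleness $m$ and $d_m=0$.

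To establish the intermediate claim, I would first inspect row $C_{2m-2}$. Since $m>\lceil(2m-2)/2\rceil=m-1$, Lemma~\ref{thm:mainsteeple} applies to every middleness-$m$ index of row $C_{2m-2}$ and gives the value $3^{2m-(2m-2)-1}=3$ at each such entry. Next I would check that no such entry is a strict local maximum: within the cluster, both neighbors also equal $3$, so the strict inequality fails internally; and the entry just past the inner end of the cluster has middleness $m+1$, hence value $3^{2(m+1)-(2m-2)-1}=27>3$ by the same lemma. Therefore Theorem~\ref{fractal} forces every entry of the middleness-$m$ cluster in row $C_{2m-2}$ to propagate as $(V/3,V/3,V/3)=(1,1,1)$, producing a block of $3^{m-1}$ consecutive ones in each middleness-$m$ cluster of row $C_{2m-1}$, as required.

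The main obstacle is handling the small cases cleanly. For $m=1$, the intermediate claim reduces to the trivial observation that row $C_1=\{1,3,1\}$ has $C_1(0)=C_1(2)=1$. For $m=2$, the middleness-$m$ cluster in row $C_{2m-2}=C_2$ has only a single entry on each side, so the ``both neighbors equal to $3$'' argument does not apply; instead one checks directly that $C_2(3)=3$ has neighbors $C_2(2)=1$ and $C_2(4)=9$, so it is not a strict local maximum since $3<9$, and symmetrically for the right cluster. Once these two base cases are dispatched, the argument above applies uniformly for all $m\geq 3$, and the left and right middleness-$m$ clusters are handled by symmetric reasoning.
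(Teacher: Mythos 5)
Your proposal is correct and follows essentially the same route as the paper's (much terser) proof: the middleness-$m$ part of $C_n$ is traced back to $3^{m-1}$ consecutive ones in row $C_{2m-1}$, which arise from the value-$3$ steeple entries of middleness $m$ in row $C_{2m-2}$ via Theorem~\ref{fractal}, and then each $1$ independently propagates into a copy of $C_{n+1-2m}$. Your write-up supplies the details the paper omits (the non-local-maximum check and the small cases $m=1,2$), and these details check out.
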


\begin{proof}
This part of the row is propagated from a series of $3^{m-1}
$ consecutive ones in row $C_{2m - 1}$, which are in turn generated from the steeple in row $C_{2m-2}$.
\end{proof}

For example, consider $C_4$. The first and last third of this row are copies of $C_3$, and the middle third is divided into three parts of equal size. The first and the last part are three copies of $C_1$. Finally, the middle part of the middle third is the steeple.

The next odd-indexed row has the same description as the previous row with shifted indices. For each new odd row the description goes deeper into the past.

\subsection{Counts}

In this section we count how many times each value appears in row $C_n$. We start by describing all the peaks. 

\begin{lemma}
All the peaks equal to $3^m$ in $C_n$, $m> 1$, are obtained as the propagation of the peaks $3^{m-1}$ at level $C_{n-1}$. All the peaks equal to 3 in $C_n$ are a propagation of all the terms of value 1 in $C_{n-1}$.
\end{lemma}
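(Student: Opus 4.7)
The plan is to apply Theorem~\ref{fractal} directly to classify where strict local maxima in $C_n$ can come from. For any position $j$ in $C_n$, write $j = 3i + r$ with $r \in \{0, 1, 2\}$, so that $j$ is one of the three positions into which the entry $C_{n-1}(i)$ propagates. By Theorem~\ref{fractal}, the triple $(C_n(3i), C_n(3i+1), C_n(3i+2))$ equals either $(V, 3V, V)$ when $V := C_{n-1}(i)$ is $1$ or a strict local maximum, or $(V/3, V/3, V/3)$ otherwise. The immediate $C_n$-neighbors of $3i+1$ are always $3i$ and $3i+2$, both in the same triple, so testing whether $3i+1$ is a strict local maximum reduces to a within-triple comparison.

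First I would rule out $r = 0$ and $r = 2$. In both branches above, $C_n(3i) \leq C_n(3i+1)$ and $C_n(3i+2) \leq C_n(3i+1)$, so the in-triple neighbor of $3i$ (respectively $3i+2$) is at least as large as it is. Hence these positions cannot be strict local maxima in $C_n$, regardless of what lies across the triple boundary. So every peak of $C_n$ has the form $3i+1$ for some index $i$ in $C_{n-1}$.

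Next I would analyze $r = 1$. For $3i+1$ to be a strict local maximum, the propagation must be $(V, 3V, V)$ rather than $(V/3, V/3, V/3)$, since the latter produces three equal values. Hence $C_{n-1}(i)$ is either $1$ or a strict local maximum, and $C_n(3i+1) = 3\,C_{n-1}(i)$. Setting this equal to $3^m$ forces $C_{n-1}(i) = 3^{m-1}$. If $m > 1$, then $3^{m-1} \geq 3 > 1$, which rules out the ``value $1$'' case and shows $C_{n-1}(i)$ is a strict local maximum of value $3^{m-1}$, giving the first assertion. If $m = 1$, then $C_{n-1}(i) = 1$; no strict local maximum in $C_{n-1}$ can have value $1$ since cross-differences are at least $1$, so $C_{n-1}(i)$ falls in the ``value $1$'' branch, giving the second assertion.

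The main obstacle, minor as it is, is making sure no peak of $C_n$ sneaks in at positions $3i$ or $3i + 2$ by virtue of very small neighbors across a triple boundary. This concern is dispatched by the fact that $3i+1$ always weakly dominates both of its in-triple siblings, so no cross-triple comparison is ever required. Everything else is a clean case split on the two branches of Theorem~\ref{fractal}.
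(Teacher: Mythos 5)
Your proof is correct and follows the same idea as the paper, which simply observes that peaks can only arise from the $(V,3V,V)$ branch of Theorem~\ref{fractal}; you have filled in the within-triple comparisons and the value bookkeeping that the paper leaves implicit.
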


\begin{proof}
Peaks can only be achieved through the propagation of the first type: $V$ to $(V,3V,V)$.
\end{proof}

We would like to count how many of ones, threes and so on are there in the lists of cross-differences. For this we define two sequences: $a(n)= 3^n-(-1)^n$ and $b(n)=0^n$. The sequence $a(n)$ is sequence A105723 in the OEIS \cite{oeis}.  Starting from index zero the sequence $a(n)$ looks like:  0, 4, 8, 28, 80, 244, 728, 2188, and so on. The sequence $b(n)$ is the characteristic function of 0: sequence A7 in the OEIS.

The following theorem describes the counts in terms of these two sequences. The cross-differences in row $n$ are equal to $3^k$, where $0 \leq k \leq n$. 

\begin{theorem}\label{thm:counts}
In  row $n$, the number of cross-differences that are equal to $1$ is $b(n) + a(n)/2$. For $k > 0$, the number of cross-differences equal to $3^k$ is $b(n-k) + a(n - k)$: the number of peaks of value $3^k$ is $b(n-k) + a(n-k)/2$, while the number of non-peaks of value $3^k$ is $a(n-k)/2$.
\end{theorem}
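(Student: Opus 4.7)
My plan is to track, row by row, separate counts of peaks and non-peaks for each value and to set up linear recursions that propagate them. Writing $u(n)$ for the number of $1$s in $C_n$ and, for $k\geq 1$, $P_k(n)$ and $N_k(n)$ for the number of peaks and non-peaks of value $3^k$ in $C_n$, the theorem claims
\[
u(n)=b(n)+\tfrac{1}{2}a(n),\qquad P_k(n)=b(n-k)+\tfrac{1}{2}a(n-k),\qquad N_k(n)=\tfrac{1}{2}a(n-k).
\]

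The first step is to classify, after one application of Theorem~\ref{fractal}, which entries produced in $C_n$ are peaks. In a triple $(V,3V,V)$ both outer $V$s have the larger $3V$ as inner neighbor, so neither is a strict local maximum, while the middle $3V$ is a peak. In a triple $(V/3,V/3,V/3)$ every element has an equal neighbor inside the triple, so none is a peak. And $1$ is never a peak since $1$ is the minimum cross-difference. Combined with the preceding lemma, this classification yields
\[
P_1(n)=u(n-1),\qquad P_{k+1}(n)=P_k(n-1),
\]
\[
u(n)=2u(n-1)+3N_1(n-1),\qquad N_k(n)=2P_k(n-1)+3N_{k+1}(n-1),
\]
valid for $k\geq 1$, where the coefficient $2$ counts the two outer non-peak outputs of a first-rule triple and the coefficient $3$ counts all three outputs of a second-rule triple.

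Finally I would prove the closed forms by induction on $n$, with base cases $n=0,1$ checked directly from $C_0=\{1\}$ and $C_1=\{1,3,1\}$. Unfolding the peak recursions gives $P_k(n)=u(n-k)$, which matches the claimed formula as soon as $u$ is established. For $u(n)$ and $N_k(n)$, substituting the inductive hypothesis into the recursions reduces everything to the identity
\[
a(n-1)+\tfrac{3}{2}a(n-2)=\tfrac{1}{2}a(n),
\]
which follows from $a(m)=3^m-(-1)^m$ by matching the $3^m$ parts via $3^{n-1}+\tfrac{3}{2}\cdot 3^{n-2}=\tfrac{1}{2}\cdot 3^n$ and the $(-1)^m$ parts via $-(-1)^{n-1}-\tfrac{3}{2}(-1)^{n-2}=-\tfrac{1}{2}(-1)^n$. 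The only nonzero $b$-contributions occur at $n=0$ or $k=n$, both handled by direct inspection.

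The main obstacle is really the classification step: one must be sure that no outer copy of a first-rule triple is a strict local maximum (the inner $3V$ blocks it) and that no copy of a second-rule triple is one (an equal neighbor blocks it). Once that is settled, the linear recursions together with the single identity about $a(n)$ force the closed forms.
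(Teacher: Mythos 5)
Your proposal is correct and follows essentially the same route as the paper: the paper's proof uses exactly the same recursions ($u(n)=2u(n-1)+3N_1(n-1)$, $N_k(n)=2P_k(n-1)+3N_{k+1}(n-1)$, and peaks of $3^k$ coming from peaks of $3^{k-1}$ or from the $1$s when $k=1$), closed by induction via the identity $a(n)=2a(n-1)+3a(n-2)$, which is your identity multiplied by $2$. The only difference is presentational: you spell out the classification of which outputs of Theorem~\ref{fractal} are peaks, whereas the paper isolates that observation in the lemma immediately preceding the theorem.
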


\begin{proof}
The proof is by induction on $n$. For the base of induction consider $n$ equal to zero or one. In row zero, the number of cross-differences equal to 1 is $1 = b(0) + a(0)/2$. In row one, the number of cross-differences equal to 1 is $2 = a(1)/2$ and the number of 3-peaks is $1= b(0) + a(0)/2$. There are no other possible values for $k$. Let us now assume the statement is true up to row $i$.

The number of ones in row $i+1$ is twice the number of ones in the previous row plus three times the number of non-peak $3$s in the previous row. Thus it is equal to $2(b(i) + a(i)/2) + 3a(i-1)/2$. For $i \geq 1$ it iss not hard to check that $a(i+1)= 2a(i)+3a(i-1)$; indeed, $3^{i+1} - (-1)^{i+1} = 2(3^i - (-1)^i) + 3(3^{i-1} - (-1)^{i-1})$. Hence the claim holds when $k = 0$. 

In general, the number of non-peaks of value $3^k$ in row $i + 1$, where $k > 0$ is twice the number of peaks of the same value in the previous row plus three times the number of non-peaks of the next value. The calculation is the same as above.

The number of peaks of value $3^k$ in row $i + 1$, for $k > 0$, is the number of peaks equal to $3^{k-1}$ in row $i$, or the total number of ones if $k=1$. Thus  the number of peaks of value $3^k$ in row $i+1$ is $b(i-k+1) + a(i-k+1)/2$. This proves our statement for peaks.
\end{proof}

Hence the number of cross-differences that equal to 1 as a function of the row number $n$ is $1$ when $n = 0$ and $a(n)/2$ otherwise: 1, 2, 4, 14, 40, 122, 364, and so on. This is sequence A152011 in the OEIS \cite{oeis}. The number of cross-differences that are equal to  $3^k$ in row $n$ is $b(n-k) + a(n-k)$.

We note that the first time a particular value appears it occurs exactly once in the middle of its row. Afterwards, each non-one value is split evenly between  peaks and non-peaks.

\subsection{Ternary representation}

In this section we give a recursive method of computing the cross-difference at any index $i$ in terms of its ternary representation. Recall that since the first third of $C_{n}$ is a copy of $C_{n - 1}$, we can interpret the cross-differences as a single infinite sequence. Thus the value of the cross-difference depends only on the index $i$ (and not the row number). 

Suppose the ternary representation of $i$ has $n$ digits, and let $m = m(i)$ be the middleness of $i$. If $m > \lceil n/2 \rceil$, then we can apply the steeple formula in Lemma~\ref{thm:mainsteeple} to calculate the cross-difference explicitly as $3^{2m - n - 1}$. If $m \le \lceil n/2 \rceil$, then we conclude from Theorem~\ref{thm:recursive} that the range of indices with the same first $m$ digits as $i$ corresponds to $3^{m - 1}$ consecutive copies of $C_{n + 1 - 2m}$. Then the cross-difference at index $i$ is the same as the cross-difference at index $i'$, where $i'$ is obtained from $i$ by removing the $m + (m - 1) = 2m - 1$ leading ternary digits.

\begin{itemize}
\item If the number of leading ones is less than half of the total length, remove these ones and the same number of digits after that, plus one extra digit. Continue recursively.

\item  If the number of ones is not less than a half, then we are in a steeple and should use the steeple formula.
\end{itemize}
 
It is easy to see that only the positions of the $1$s in the ternary representation of $i$ affect the recursive procedure, so we may freely interchange $0$s and $2$s. Thus we have the following lemma:

\begin{lemma}
The value at index $i$ is the same as at index $j$ if $i$ and $j$ have $1$s in exactly the same positions in their ternary representations. 
\end{lemma}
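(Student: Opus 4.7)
The plan is to argue by strong induction on $n$, the common length of the ternary representations of $i$ and $j$ (with leading zeros added if necessary), using the recursive procedure and the steeple formula summarized just before the lemma. The first observation is that the hypothesis ``$i$ and $j$ have $1$s in exactly the same positions'' immediately forces $m(i) = m(j) =: m$, because the middleness is the location of the first non-$1$ digit, which is determined by the $1$-positions. In particular, at every position the two indices either both have a $1$ or both have a digit from $\{0, 2\}$; they differ only up to swapping $0$s with $2$s.

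If $i$ and $j$ consist entirely of $1$s, then $i = j = (3^n - 1)/2$ and there is nothing to prove. Otherwise, the induction splits into the two cases identified earlier. In the steeple regime $m > \lceil n/2 \rceil$, Lemma~\ref{thm:mainsteeple} yields the explicit value $C_n(i) = 3^{2m - n - 1} = C_n(j)$, since this expression depends only on $m$ and $n$ and not on the specific $0$s and $2$s at positions $\geq m$. In the recursive regime $m \leq \lceil n/2 \rceil$, I would invoke Theorem~\ref{thm:recursive}: stripping the first $2m - 1$ ternary digits produces indices $i'$ and $j'$ of length $n + 1 - 2m$ (strictly smaller than $n$ since $m \geq 1$) that satisfy $C_n(i) = C_{n+1-2m}(i')$ and $C_n(j) = C_{n+1-2m}(j')$. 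Because the deleted prefixes of $i$ and $j$ have matching $1$-patterns and the surviving suffixes do too, the residual indices $i'$ and $j'$ once again have their $1$s at identical positions, and the inductive hypothesis gives $C_{n+1-2m}(i') = C_{n+1-2m}(j')$.

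The only real subtlety is bookkeeping around the recursive step---namely checking that the ``matching $1$-positions'' property is preserved by deleting the leading $2m - 1$ digits, and that the induction parameter strictly decreases. Both are immediate from the hypothesis together with $m \geq 1$, with the base cases $n = 0$ (the single index $0$) and $n = 1$ (the triple $1, 3, 1$) handled by direct inspection. Beyond this, the lemma is essentially a direct corollary of the recursive characterization established just before its statement, combined with the steeple formula of Lemma~\ref{thm:mainsteeple}.
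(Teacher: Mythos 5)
Your proposal is correct and takes essentially the same approach as the paper: the paper disposes of this lemma with the single observation that the recursive procedure (middleness, the steeple formula $3^{2m-n-1}$, and the stripping of the leading $2m-1$ digits) depends only on the positions of the $1$s, and your strong induction on the length $n$ is just a careful formalization of that observation, including the correct handling of the all-ones middle index. No gaps.
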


\subsection{Particular values}

Let us look at the specific case when the cross-difference is one. From Theorem~\ref{thm:counts} we know that the number of ones in row $n$ is $(3^n-(-1)^n)/2$, so asymptotically half of the values are 1.

On the other hand, this set contains all the numbers without ones in their ternary representation. The latter set tends to the Cantor set, which has zero density --- our set is much bigger. Compare this to the no-reduction case, in which the cross-differences are one precisely on the Cantor set. To make the contrast clear, we draw the set of unit cross-differences in  Figure~\ref{fig:ones}; the format is the same as in Figure~\ref{fig:CantorSet}. 

\begin{figure}[htbp]
\centering
\includegraphics[scale=0.7]{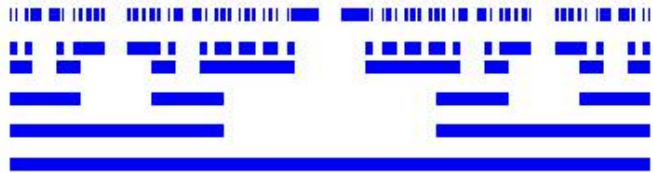}
\caption{The representation of ones}\label{fig:ones}
\end{figure}

\section{Quasi-fractals in real life}\label{sec:reallife}

Part of why fractals are such a popular object of study is their ubiquity in real life.  Famous examples of naturally occurring fractals include Romanesco broccoli, ammonite sutures, mountain ranges, and ferns. Surprisingly, the quasi-fractals that are discussed in the paper can be found in real life as well. The examples we present here are man-made, however. 

The contour in Figure~\ref{fig:4Red} reminded the second author of her alma mater, Moscow State University. Figure~\ref{fig:MGU} shows the front profile of the main building. Note how the center tower is not simply a scaled copy of the side towers, but is rather more elaborate -- it resembles a steeple. On the fringes of the center tower we can see smaller sub-towers which parallel Theorem \ref{thm:recursive}. The large flat portions mimic long runs of $1$s in the sequence of cross-differences. 

\begin{figure}[htbp]
\centering
\includegraphics[scale=0.05]{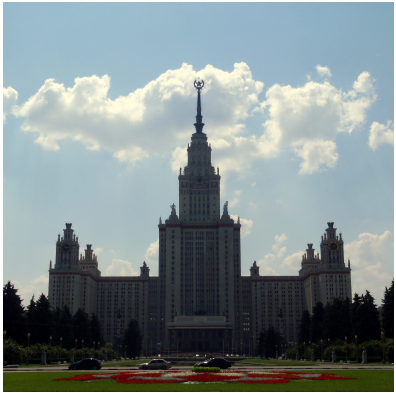}
\caption{Moscow State University \cite{MSU}}\label{fig:MGU}
\end{figure}

Quasi-fractals also appear in jewelry. In Figure~\ref{fig:necklace} below, we can see the motif of ``identical left and right part (earrings), with a similar but more complicated middle part (the pendant)'' once again. This holds recursively within the necklace as well. 

\begin{figure}[htbp]
\centering
\includegraphics[scale=0.5]{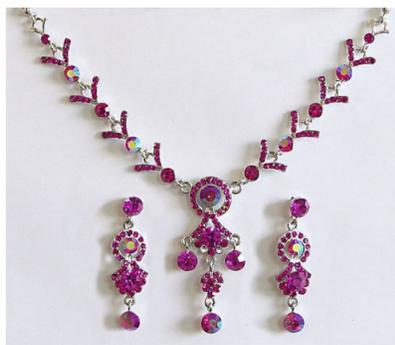}
\caption{Magenta Stone Studded Necklace and Earrings \cite{Necklace}}\label{fig:necklace}
\end{figure}

\section{Acknowledgements}

We would like to thank Prof. James Propp (UMass) for suggesting the project and discussing it with us.

\end{document}